\documentclass{amsart}
\usepackage{amsmath,amssymb,amsthm,amsfonts,enumerate,array,color,lscape,fancyhdr,layout,pst-all,wasysym}
\usepackage[all,cmtip]{xy}
\usepackage[english]{babel}
\usepackage[latin1]{inputenc}
\usepackage{young}
\usepackage{tikz}


\usepackage[hcentermath]{youngtab}
\usepackage{graphicx}
\usepackage{float}
\usepackage{pstricks}

\usepackage{stmaryrd}

\usepackage{amsthm}

\newcounter{numberofremark}
\setcounter{numberofremark}{0}

\newcommand\nothing[1]{}
\usepackage[active]{srcltx}
\usepackage[hyperindex,bookmarksnumbered,plainpages]{hyperref}

\newcommand{\dcl}{\DeclareMathOperator}
\dcl\cdet{cdet} \dcl\Sp{Specm} \dcl\depth{depth} \dcl\im{Im} \dcl\Span{span} \dcl\Ker{Ker} \dcl\Specm{Specm}
\dcl\Supp{Supp} \dcl\codim{codim} \dcl\Y{Y} \dcl\gl{\mathfrak{gl}}    \dcl\U{U} \dcl\T{T}
\dcl\qdet{qdet} \dcl\sgn{sgn} \dcl\gr{gr} \dcl\diag{diag}
\dcl\g{\mathfrak{g}} \dcl\C{\mathbb C} \dcl\dd{{\mathrm d}}
\dcl\ad{{\mathrm ad}}

\newcommand{\mat}[1]{\mbox{\boldmath{$#1$}}}
\newcommand{\smat}[1]{\mbox{\tiny{\boldmath{$#1$}}}}

\newlength\yStones
\newlength\xStones
\newlength\xxStones

\makeatletter
\def\Stones{\pst@object{Stones}}
\def\Stones@i#1{%
  \pst@killglue%
  \begingroup%
  \use@par%
  \setlength\xxStones{\xStones}%
  \expandafter\Stones@ii#1,,\@nil
  \endgroup
  \global\addtolength\xStones{0.6cm}%
  \global\addtolength\yStones{-7.5mm}}%
\def\Stones@ii#1,#2,#3\@nil{%
  \rput(\xxStones,\yStones){%
    \psframebox[framesep=0]{%
      \parbox[c][6mm][c]{11mm}{\makebox[11mm]{$#1$}}}}%
  \addtolength\xxStones{1.2cm}%
  \ifx\relax#2\relax\else\Stones@ii#2,#3\@nil\fi}
\makeatother

\setlength\fboxrule{0.4pt}
\def\Stone#1{\fbox{\makebox[13mm]{\strut#1}}\kern2pt}

\newtheorem{thm}{Theorem}

\newtheorem{theorem}{Theorem}[section]
\newtheorem{lemma}[theorem]{Lemma}
\newtheorem{corollary}[theorem]{Corollary}
\newtheorem{proposition}[theorem]{Proposition}
\newtheorem{example}[theorem]{Example}
\newtheorem{remark}[theorem]{Remark}

\newtheorem{definition}[theorem]{Definition}

\title[Explicit realization of bounded modules]{Explicit realization of bounded modules for symplectic Lie algebras: spinor versus oscillator}
\author[V.Futorny]{Vyacheslav Futorny}
\address{Shenzhen International Center for Mathematics, Southern University of Science and Technology} \email{futorny@sustech.edu.cn} 
\author[D.Gratcharov]{Dimitar Grantcharov}
\address{\noindent
University of Texas at Arlington, Arlington, TX 76019, USA} \email{grandim@uta.edu}
\author[L.E.Ramirez]{Luis Enrique Ramirez}
\address{Universidade Federal do ABC, Santo Andr\'e-SP, Brasil}\email{luis.enrique@ufabc.edu.br}
\author[P.Zadunaisky]{Pablo Zadunaisky}
\address{Instituto de Investigaciones Matem\'aticas Luis A. Santal\'o (IMAS), Buenos Aires, Argentina} 
\email{pzadunaisky@conicet.gov.ar}
\begin{document}
\maketitle
\begin{abstract}
We provide an explicit combinatorial realization of all simple and injective (hence, and projective) modules in the category of bounded $\mathfrak{sp}(2n)$-modules. This realization is defined via a natural tableaux correspondence between spinor-type modules of $\mathfrak{so}(2n)$ and oscillator-type modules of $\mathfrak{sp}(2n)$. In particular, we show that, in contrast with the  $A$-type case, the generic and  bounded $\mathfrak{sp}(2n)$-modules admit an analog of the Gelfand-Graev continuation from finite-dimensional representations.

\medskip\noindent 2020 MSC: 17B10, 16G99 \\

\noindent Keywords and phrases: symplectic Lie algebra, orthogonal Lie algebra, Gelfand-Tsetlin basis, spinor modules, oscillator modules.
\end{abstract}

\section*{Introduction}

In the recent years there has been a growing interest in the theory of Gelfand-Tsetlin modules \cite{DFO94}, especially after the discovery of their connection with diagrammatic KLRW algebras \cite{Web17} and Coulomb branches \cite{Web20}. After the breakthrough results in \cite{FGR16}, \cite{FGR17b}, \cite{FGR17a}, \cite{Zad17}, \cite{RZ18}, \cite{Vis18}, \cite{EMV20}, \cite{FGRZ20} and \cite{Web20}, 
 a parameterization of all simple Gelfand-Tsetlin $\mathfrak{gl}(n)$-modules was established in terms of Gelfand-Tsetlin tableaux. One can say that the Gelfand-Tsetlin modules form the largest category of weight modules where such a parameterization is obtained.  The Gelfand-Tsetlin tableaux provide a combinatorial basis on which the action of the generators of  $\mathfrak{gl}(n)$ can be explicitly derived. Unlike for the finite-dimensional modules, this  action  is quite complicated, due to the existence of the so-called ``derivative tableaux''. Let us say that a Gelfand-Tsetlin module $M$ satisfies the \emph{Gelfand-Graev continuation principle} \cite{GG65} if $M$ has a basis of Gelfand-Tsetlin tableaux on which the  action of the generators is expressed in terms of  the classical Gelfand-Tsetlin formulas \cite{GT50}.  In this case, each basis tableau is an eigenvector of the Gelfand-Tsetlin subalgebra -  certain maximal commutative subalgebra of the universal enveloping algebra. In the vertex algebra theory such representations are called \emph{strongly tame} 
 \cite{FKM23}, \cite{FMR21}.

One of our  aims in this paper is to initiate a ``Gelfand-Tsetlin theory'' in the symplectic case. Since the concepts of  
Gelfand-Tsetlin modules is not settled in this case yet, as a first step we look at $\mathfrak{sp}(2n)$-modules that  have Gelfand-Tsetlin tableaux realization.  The tableaux realization of all simple finite-dimensional $\mathfrak{sp}(2n)$-modules was obtained by A. Molev in  \cite{Mol99}, \cite{Mol00a}, and \cite{Mol00b}, and relies on the theory of Yangians. The $C$-type case is particularly complicated because the union of the centers of $U(\mathfrak{sp}(2k))$, $1\leq k \leq n$, is not a maximal subalgebra. In this case one needs to add ``intermediate'' elements to define a Gelfand-Tsetlin type subalgebra $\mathcal{GT}(\mathfrak{sp}(2n))$ of $U(\mathfrak{sp}(2n))$, \cite{MY19}. It is proven that the action of $\mathcal{GT}(\mathfrak{sp}(2n))$ on any simple finite-dimensional $\mathfrak{sp}(2n)$-module $M$ is diagonalizable and has simple spectrum on $M$, see Example 5.6 in \cite{MY19}. However, in the contrast with the $A$-type case, explicit formulas for the action of the generators of $\mathcal{GT}(\mathfrak{sp}(2n))$ on the the tableaux basis of $M$ are not known yet. 

In this paper we consider only modules for which the action of the generators satisfy the  Gelfand-Graev continuation principle. 
We provide an explicit tableaux realization of two types of $\mathfrak{sp}(2n)$-modules: bounded modules (i.e., those weight modules that have uniformly bounded set of weight multiplicities) and generic modules. More precisely, such tableaux basis is given for all simple and indecomposable injective (and projective) modules in the category $\mathcal B$ of all infinite-dimensional bounded $\mathfrak{sp}(2n)$-modules. The simple objects of $\mathcal B$ were classified in \cite{Mat00}, while the injectives (equivalently, the projectives) of  $\mathcal B$ were described in \cite{GS06}. The classification and explicit description of all bounded modules of degree $1$ was obtained in \cite{BL87}.

One important aspect of the paper is that we provide a transparent correspondence between half-integer (i.e., spinor-type) finite-dimensional $\mathfrak{so}(2n)$-modules and infinite-dimensional bounded (i.e., oscillator-type) $\mathfrak{sp}(2n)$-modules. This correspondence was well-understood in the case of the classical spinor and Shale-Weil representations, i.e., the cases involving modules whose weight multiplicities equal to $1$.

We hope that our results are a part of a bigger picture. Namely, that if $M$ is an $\mathfrak{so}(2n)$-module with a basis consisting of $C$-regular tableaux, then for a generic choice of $\mat{\mu} \in \mathbb C^n$, we have an $\mathfrak{sp}(2n)$-module $V(\mat{\mu}, M)$ that has a tableaux realization. The cases covered in this paper include generic $\mathfrak{so}(2n)$-modules $M$ and spinor-type finite-dimensional $\mathfrak{so}(2n)$-modules $M$,  leading to generic and bounded $\mathfrak{sp}(2n)$-modules, respectively. 

In summary, the goal of this paper is threefold - to develop a Gelfand-Tsetlin theory for $\mathfrak{sp}(2n)$-modules that have Gelfand-Graev continuation, to provide an explicit tableaux realization of all bounded modules, and to shed light over the mysterious connection between spinor-type $\mathfrak{so}(2n)$-modules and oscillator-type  $\mathfrak{sp}(2n)$-modules. It is worth noting that in addition to the symplectic case we obtain a Gelfand-Graev continuation for the generic $\mathfrak{so}(2n)$-modules.

The content of the paper is as follows. In Section 2 we collect some important results on tableaux realizations of finite-dimensional modules over Lie algebras of type $C$ and $D$. In Section 3 we provide a summary of the results on bounded $\mathfrak{sp}(2n)$-modules that are used in the paper. In Section 4 we prove all our main results on the tableaux realizations of generic and bounded modules of $\mathfrak{sp}(2n)$.  

For reader's convenience we briefly state some of our results here (see Theorem \ref{module structure}, Theorem \ref{number of D-standard tableaux}, Theorem \ref{thm-main} for the complete versions). Let
$\mat{\lambda}\in (\frac{1}{2}+\mathbb{Z})^n$ be a dominant  weight of $\mathfrak{so}(2n)$ and  $\mat{\mu}\in(\mathbb{C}\setminus\mathbb{Z})^n$. Denote by $\mathcal{B}(\mat{\mu},\mat{\lambda})$ the set of tableaux $T(W)$ of type $C$ with top row $\mat{\lambda}+\rho_{D}+\mat{\frac{1}{2}}=(\lambda_{1}-1+\frac{3}{2},\ldots,\lambda_{n}-n+\frac{3}{2})$, whose left-most column is in 
$\mat{\mu}+\mathbb{Z}^{n}$, and whose remaining part is a $D$-standard tableau
 (see Definitions \ref{def-T_D}, \ref{V space}). Denote by $V(\mat{\mu},\mat{\lambda})$
 the $\mathbb{C}$-vector space  with basis $\mathcal{B}(\mat{\mu},\mat{\lambda})$.

\begin{thm}
The space $V(\mat{\mu}, \mat{\lambda})$ has a structure of an $\mathfrak{sp}(2n)$-module obtained by the Gelfand-Graev continuation.
\end{thm}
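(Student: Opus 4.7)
The plan is to transport the classical Gelfand--Tsetlin formulas for type $C$ finite-dimensional $\mathfrak{sp}(2n)$-modules, as recorded in Section~2, from the dominant integral setting to our shifted setting, and to verify that they define a genuine module structure on $V(\mat{\mu},\mat{\lambda})$. Concretely, we would declare the generators of $\mathfrak{sp}(2n)$ to act on a tableau $T(W)\in\mathcal{B}(\mat{\mu},\mat{\lambda})$ by exactly the same rational expressions used in Molev's tableau realization of finite-dimensional $\mathfrak{sp}(2n)$-modules: each generator sends $T(W)$ to a linear combination of tableaux obtained by shifting a single entry by $\pm 1$, with coefficients that are rational functions of the entries of $T(W)$.

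The first substantive step is to show that this prescribed action preserves the basis $\mathcal{B}(\mat{\mu},\mat{\lambda})$. The top row $\mat{\lambda}+\rho_D+\mat{\frac{1}{2}}$ is fixed because the classical formulas never shift the top row. The leftmost column stays inside $\mat{\mu}+\mathbb{Z}^n$ because all shifts are by integers. The remaining, genuinely nontrivial, point is that whenever a shift produces a tableau that is \emph{not} $D$-standard, the accompanying rational coefficient must vanish. I would verify this by examining the numerators of Molev's formulas and matching their zero locus with the $D$-standardness inequalities of Definition~\ref{def-T_D}: each boundary configuration of the $D$-standard region should be annihilated by a linear factor in the numerator, exactly as it is in the finite-dimensional case.

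Next, one must check that no denominator in the formulas vanishes on $\mathcal{B}(\mat{\mu},\mat{\lambda})$. This is where the hypothesis $\mat{\mu}\in(\mathbb{C}\setminus\mathbb{Z})^n$ is used decisively: the denominators are products of differences between entries of adjacent rows, and the entries of the leftmost column lie in $\mat{\mu}+\mathbb{Z}^n$ while the $D$-standard part has half-integer entries, so these differences are never integers, let alone zero. For differences internal to the $D$-standard block, the $D$-standard inequalities rule out collisions. Thus the formulas define an everywhere-regular action on $V(\mat{\mu},\mat{\lambda})$.

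Finally, one must verify that the defining relations of $\mathfrak{sp}(2n)$ are satisfied. This is the essence of the Gelfand--Graev continuation principle: each relation, when expressed in the tableau basis, becomes a polynomial identity in the entries of $T(W)$ with rational coefficients. Because the identity is known to hold on the Zariski-dense locus corresponding to the finite-dimensional modules in Section~2, it holds wherever the formulas are defined; the previous step guarantees that our set of parameters lies in the regular locus. The main obstacle is the combinatorial step of matching numerator zeros with the $D$-standard boundary: Molev's formulas are intricate in type $C$, and one has to be careful to distinguish the factors coming from the fixed top row and leftmost column (which are automatic) from those enforcing $D$-standardness of the middle block, so that every escape from $\mathcal{B}(\mat{\mu},\mat{\lambda})$ is indeed killed by a vanishing coefficient.
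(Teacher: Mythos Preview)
Your step 1 is false, and this invalidates the whole approach. Look at formula~\eqref{Formulas sp1}: the operator $F_{k,-k}$ shifts $\ell'_{ki}$ up by $1$ with coefficient $A_{ki}(L)=\prod_{a\neq i}(\ell'_{ka}-\ell'_{ki})^{-1}$, which has \emph{no numerator at all}. Increasing $\ell'_{ki}$ can violate the $D$-standard inequality $\ell_{k,i-1}>\ell'_{ki}$ (or $\ell_{k-1,i-1}>\ell'_{ki}$), and $A_{ki}$ certainly does not vanish there. The same already happens in the finite-dimensional case: in Molev's realization of $L_C(\mat\lambda)$, tableaux falling outside $C_{st}^{\mat\lambda}$ are simply \emph{declared} to be zero, not killed by vanishing coefficients. (You may be thinking of the $\mathfrak{gl}(n)$ Gelfand--Tsetlin formulas, where this boundary-vanishing \emph{does} hold; it fails in type $C$.)

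Once step 1 fails, step 3 also breaks. With non-basis tableaux set to zero by fiat, the coefficient of $T(W+Z_k)$ in $u\cdot T(W)$ is computed along those paths that stay inside $\mathcal B(\mat\mu,\mat\lambda)$; which paths survive depends on where $T(W)$ sits relative to the boundary, so there is no single rational function you can compare by Zariski density. The paper handles this not by checking boundary vanishing but by a shape-matching trick: for each fixed $T(W)$ it builds an explicit integer shift $T(S)$, large relative to the degree of $u$, such that for every $C$-standard $T(R)$ the tableau $T(R+S)$ is again $C$-standard and, crucially, $T(R+S+Z)\in C_{st}^{\bar{\mat\beta}}$ \emph{if and only if} $T(W+Z)\in\mathcal B(\mat\mu,\mat\lambda)$ for every integer tableau $Z$. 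Thus the module computation at $T(R+S)$ in $L_C(\bar{\mat\beta})$ uses exactly the same surviving paths---hence the same rational functions $f_i$---as the module computation at $T(W)$ in $V(\mat\mu,\mat\lambda)$. Since $u\cdot T(R+S)=0$ in the finite-dimensional module, $f_i(R+S)=0$ for all $R\in C_{st}^{\mat\beta}$; now Lemma~\ref{density lemma} applied to $f_i(X+S)$ forces $f_i\equiv 0$, hence $f_i(W)=0$. This shift construction is the missing idea.
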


An important property of $V(\mat{\mu}, \mat{\lambda})$ is that its simple subquotients $V(\mat{\mu}, \mat{\lambda}, \Sigma)$ (see Definition \ref{def-Sigma}) exhaust all simple bounded $\mathfrak{sp}(2n)$-modules. The  subquotients $V(\mat{\mu}, \mat{\lambda}, \Sigma)$  are parameterized by subsets 
$\Sigma$ of ${\rm Int} (2{\mat{\mu}})$, where ${\rm Int}(\mat{z})$ stands for  the set of all $i \in \llbracket n \rrbracket $ such that $z_i \in \mathbb Z$. 

\begin{thm}
\begin{itemize}
\item[(i)] Every infinite-dimensional simple bounded $\mathfrak{sp}(2n)$-module is isomorphic to $V(\mat{\mu}, \mat{\lambda}, \Sigma)$ for some $\mat{\mu},\mat{\lambda}, \Sigma$ as above.
\item[(ii)] The module $V(\mat{\mu}, \mat{\lambda})$ is the injective envelope of $V(\mat{\mu}, \mat{\lambda},{\rm Int} (2{\mat{\mu}}))$ in the category of bounded $\mathfrak{sp}(2n)$-modules. 
\end{itemize}
In particular, we have an explicit tableaux realization of all simple and indecomposable injective (and projective) modules in the category of bounded $\mathfrak{sp}(2n)$-modules.
\end{thm}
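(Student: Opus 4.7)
My strategy is to compare the family $V(\mat{\mu},\mat{\lambda})$ and its simple subquotients $V(\mat{\mu},\mat{\lambda},\Sigma)$ against the two existing classifications: Mathieu's classification \cite{Mat00} of simple infinite-dimensional bounded $\mathfrak{sp}(2n)$-modules, and the Grantcharov--Serganova description \cite{GS06} of the indecomposable injectives in $\mathcal{B}$. The preceding theorem already supplies the Gelfand-Graev $\mathfrak{sp}(2n)$-action on $V(\mat{\mu},\mat{\lambda})$, and a counting argument on $D$-standard tableaux (via Theorem \ref{number of D-standard tableaux}) shows that its weight multiplicities are uniformly bounded; hence $V(\mat{\mu},\mat{\lambda})$ lies in $\mathcal{B}$.

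For part (i), I would proceed in three steps. First, read off from the explicit action formulas both the central character and the weight support of each $V(\mat{\mu},\mat{\lambda},\Sigma)$: the central character depends only on $\mat{\lambda}$ through the standard shift by $\rho_{D}+\mat{\frac{1}{2}}$, while the support is a single coset of the root lattice determined by $(\mat{\mu},\Sigma)$. Second, establish simplicity of $V(\mat{\mu},\mat{\lambda},\Sigma)$ by using the $D$-standardness condition to exhibit generating and separating tableaux, equivalently, to verify that the $\mathcal{GT}(\mathfrak{sp}(2n))$-action has simple spectrum on the support. Third, as $(\mat{\mu},\mat{\lambda},\Sigma)$ ranges over the admissible parameters, the pairs (central character, support) exhaust those appearing in Mathieu's list: the half-integer weight $\mat{\lambda}$ labels precisely the semisimple coherent families of $\mathfrak{sp}(2n)$ supporting infinite-dimensional bounded modules via the spinor--oscillator correspondence developed in this paper, while $(\mat{\mu},\Sigma)$ selects a simple constituent within the corresponding coherent family. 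Since a simple bounded module is determined up to isomorphism by its central character and support (a consequence of Mathieu's twisted-localization description), part (i) follows.

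For part (ii), I would first establish indecomposability by showing that every Gelfand-Graev basis tableau is a $\mathcal{GT}(\mathfrak{sp}(2n))$-eigenvector with simple spectrum on $V(\mat{\mu},\mat{\lambda})$, so an endomorphism must preserve each one-dimensional eigenspace, and the explicit action formulas then pin the scalars to a single global value. Next, I would identify the socle: the submodule spanned by those tableaux whose leftmost column realizes exactly the integrality pattern ${\rm Int}(2\mat{\mu})$ is precisely $V(\mat{\mu},\mat{\lambda},{\rm Int}(2\mat{\mu}))$, and the action formulas show that it is contained in every nonzero submodule. Finally, for injectivity in $\mathcal{B}$, I would match the weight character, socle, and composition length of $V(\mat{\mu},\mat{\lambda})$ against the combinatorial data attached to the indecomposable injectives in \cite{GS06}; uniqueness of the injective envelope then identifies the two. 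The hardest step is this last matching: the description in \cite{GS06} is couched in abstract categorical terms, so transferring its data (socle, composition multiplicities, weight-support pattern) into tableaux-theoretic quantities demands a careful bijection between subsets $\Sigma \subseteq {\rm Int}(2\mat{\mu})$ and the composition factors predicted by the abstract theory.
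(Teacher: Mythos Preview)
Your overall architecture---match $V(\mat{\mu},\mat{\lambda},\Sigma)$ against Mathieu's list, and $V(\mat{\mu},\mat{\lambda})$ against the Grantcharov--Serganova injectives---is the same as the paper's. But two of your proposed mechanisms are problematic, and for part~(ii) the paper uses a much shorter route.

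\textbf{Part (i): the simplicity step.} You propose to prove that $V(\mat{\mu},\mat{\lambda},\Sigma)$ is simple by invoking simple $\mathcal{GT}(\mathfrak{sp}(2n))$-spectrum on the tableaux basis. As the paper explicitly warns in the introduction, the action of $\mathcal{GT}(\mathfrak{sp}(2n))$ on the Molev tableaux basis is \emph{not} known; only the Lie algebra generators act by the formulas~(\ref{Formulas sp})--(\ref{Formulas sp3}). So this tool is not available. The paper never proves simplicity directly. Instead, it computes for $V(\mat{\mu},\mat{\lambda},\Sigma)$ three invariants---the central character (Theorem~\ref{module structure}), the support coset (Proposition~\ref{support}), and the \emph{cone} $\mathcal{C}_{\Sigma,\Sigma'}$ (Lemma~\ref{lem-c-ch}, via the submodules $V_k^+$ of Theorem~\ref{submodule structure})---together with the degree $\tfrac{1}{2^{n-1}}\dim L_D(\mat{\lambda})$ (Lemma~\ref{essential support}). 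Proposition~\ref{prop-class-sp} asserts that there is a unique simple bounded module $S(\chi_{\smat{\lambda}+\smat{1}},\overline{\mat{\nu}},\Sigma)$ with this data, and its degree coincides with that of $V(\mat{\mu},\mat{\lambda},\Sigma)$; this forces the identification and hence the simplicity. Note that your claim ``a simple bounded module is determined by its central character and support'' omits the cone, which is precisely what separates different $\Sigma$'s inside one coset.

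\textbf{Part (ii): injectivity.} Your plan---prove indecomposability via $\mathcal{GT}$-eigenspaces, identify the socle, then match characters and composition data against \cite{GS06}---again leans on the unavailable $\mathcal{GT}$-action and, even granting that, the final matching would be laborious. The paper's argument is one line once part~(i) is in place. From Theorem~\ref{number of D-standard tableaux} every weight space of $V(\mat{\mu},\mat{\lambda})$ has the \emph{same} dimension, so the injective action of each $F_{k,-k}$ (visible in \eqref{Formulas sp1}) is automatically bijective. Hence $V(\mat{\mu},\mat{\lambda})\simeq D_{\llbracket n\rrbracket}V(\mat{\mu},\mat{\lambda})\simeq D_{\llbracket n\rrbracket}V(\mat{\mu},\mat{\lambda},\mathrm{Int}(2\mat{\mu}))$; by Theorem~\ref{thm-s-v} and Proposition~\ref{sp-loc-inj} the latter is $D_{\llbracket n\rrbracket}^{\smat{\mu}}L_C(\mat{\lambda}+\mat{1})$, which Proposition~\ref{sp-loc-inj}(ii) (quoting \cite{GS06}) identifies as the injective envelope of $V(\mat{\mu},\mat{\lambda},\mathrm{Int}(2\mat{\mu}))$. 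No direct computation of socle, composition series, or endomorphism ring is needed; the uniform weight-multiplicity fact does all the work.
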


\medskip
\noindent{\bf Acknowledgements.} 
V.F. is supported in part by China (NSFC) - 12350710178. D.G. is supported in part by Simons Collaboration 
Grant 855678. L.E.R. is supported in part by CNPq Grant 316163/2021-0, and Fapesp grant 2022/08948-2.
 P.Z. is a CONICET researcher.

\section{Notation and conventions}
Throughout this paper we fix $n\geq 2$. Given two complex numbers $a,b$, we write $a\geq b$ (respectively $a>b$) if $a-b\in\mathbb{Z}_{\geq 0}$ (respectively $a-b\in\mathbb{Z}_{>0}$). For $m\in \mathbb{Z}_{>0}$ we set $\llbracket  m \rrbracket =\{1,2,...,m \}$. We often  write $\mat{a}$ for the $n$-tuple $(a,a,\ldots,a)$ when $a$ is fixed (for example, $a=1$ and $a=1/2$). For $\mat{z}=(z_1,...,z_n) \in \mathbb C^n$, we write $|z|$ for the sum $\sum_{i=1}^n|z_i|$, and  ${\rm Int}(\mat{z})$ for  the set of all $i \in \llbracket n \rrbracket $ such that $z_i \in \mathbb Z$. We also set ${\rm Hf} (\mat{z}) = {\rm Int}(\mat{z + 1/2})$.

\section{Tableaux realization of simple finite-dimensional modules over algebras of type $C$ and $D$}
Recall that simple finite-dimensional modules for the Lie algebras of type $C
_n$ and $D_n$ are in one-to-one correspondence with $n$-tuples  $\mat{\lambda}=(\lambda_1, \ldots,\lambda_n)$ in $\left(\frac{1}{2}+\mathbb{Z}\right)^{n}\bigcup\mathbb{Z}^{n}$ where the numbers $\lambda_i$ are such that:
\begin{align}\label{sp-dominant}\lambda_i \geq  \lambda_{i+1}\text{ for }i\in \llbracket n-1 \rrbracket ,\text{ and }-\lambda_1\in\mathbb{Z}_{\geq 0},\text{ for }\mathfrak{sp}(2n).\\\label{so-dominant}
\lambda_i \geq  \lambda_{i+1}\text{ for }i\in \llbracket n-1 \rrbracket ,\text{ and }-\lambda_1 -\lambda_2\in\mathbb{Z}_{\geq 0},\text{ for }\mathfrak{so}(2n).
\end{align}

A weight $\mat{\lambda}$ satisfying (\ref{sp-dominant}) will be called \emph{dominant $\mathfrak{sp}(2n)$-weight}, and the corresponding simple finite-dimensional module will be denoted by $L_{C}(\mat{\lambda})$. Analogously, a weight $\mat{\lambda}$ satisfying (\ref{so-dominant}) will be called \emph{dominant $\mathfrak{so}(2n)$-weight}, and the corresponding simple finite-dimensional module will be denoted by $L_{D}(\mat{\lambda})$. The simple roots of $\mathfrak{sp}(2n)$ that lead to the dominance conditions \eqref{sp-dominant} are listed in  \S \ref{subsec-cn}, and the simple roots of $\mathfrak{so}(2n)$ related to \eqref{so-dominant} are described in a similar manner. With these choices of simple roots, we have that the corresponding half-sums of positive roots are
$$
\rho_{C} = (-1,-2,...,-n); \; \rho_{D} = (0,-1,...,-n+1).
$$

\subsection{Type $D$}
In this subsection we recall the tableaux realization of the simple finite-dimensional $\mathfrak{so}(2n)$-modules.
\begin{definition}
An array of $n^2$ complex numbers arranged in $2n-1$ rows in the following form 
$$
{\footnotesize T(L):=\begin{tabular}{cccccccccc}
\xymatrixrowsep{0.2cm}
\xymatrixcolsep{0.1cm}\xymatrix @C=0.5em {&\ell_{n1}& &\ell_{n2}& &\ell_{n3}&&\cdots&&\ell_{nn}\\
& &\ell'_{n,2}& &\ell'_{n,3}&&\cdots&&\ell'_{n,n}&\\
&\ell_{n-1,1}& &\ell_{n-1,2}& &\cdots&&\ell_{n-1,n-1}&&\\
& &\ell'_{n-1,2}& &\cdots&&\ell'_{n-1,n-1}&&&\\
&\vdots& &\vdots&&\vdots&&&&\\
& &\vdots&&\vdots&&&&&\\
&\ell_{21}& &\ell_{22}& &&&&&\\
& &\ell'_{22}& &&&&&&\\
&\ell_{11}& && &&&&&\\
}\end{tabular} }
$$
will be called  \emph{Gelfand-Tsetlin tableau (or just tableau) of type $D$}. 
\end{definition}
\begin{remark}
Note that for the sake in convenience in the definition above, the entries $\ell'_{ij}$ appear for $2\leq j\leq i\leq n$, instead of the more standard convention when they appear for  $1\leq j\leq i\leq n-1$ (see for example  \S 9.6 in \cite{Mol07}, and  \S 3 in \cite{Mol00a}). 
\end{remark}

 \begin{definition}\label{D-standard}
A tableau $T(U)$ of type $D$ is called \emph{$D$-standard} if for any $2\leq k\leq n$, the following conditions are satisfied:
\begin{align}\label{condition 5}
&-u'_{k,2}\geq u_{k,1}>u'_{k2}\geq u_{k2}> \cdots > u'_{k,k-1}\geq u_{k,k-1}> u'_{k,k}\geq u_{k,k}.\\\label{condition 6}
&-u'_{k,2}\geq u_{k-1,1}>u'_{k,2}\geq u_{k-1,2}> \cdots> u'_{k,k-1}\geq u_{k-1,k-1}> u'_{k,k}.
\end{align}
\end{definition}

 \begin{remark}\label{D-standard vs patterns} In \cite{Mol00a} a slight modification of the above conditions is used. Namely, the strict inequalities are replaced by nonstrict and the corresponding tableau is referred as a $D$-type pattern. There is a bijection between the $D$-stardard tabelaux and the $D$-type patterns defined by $\ell_{ij}\to \ell_{ij}-j+\frac{3}{2}$, $\ell'_{ij}\to \ell'_{ij}-j+\frac{3}{2}$.
 \end{remark}

\begin{theorem}[\cite{Mol07}, Lemma 9.6.7.]\label{shifted D}If $\mat{\lambda}=(\lambda_1,\ldots,\lambda_n)$ is a dominant $\mathfrak{so}(2n)$-weight, the simple finite-dimensional module $L_{D}(\mat{\lambda})$ admits a tableaux realization with basis given by the set of $D$-standard tableaux $T(W)$ with  top row $\mat{\lambda}+ \rho_D + \mat{1/2}$. 
\end{theorem}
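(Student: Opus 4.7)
The plan is to reduce the statement to the classical Gelfand-Tsetlin basis theorem for type $D$ (Lemma 9.6.7 of \cite{Mol07}) via the shift bijection of Remark \ref{D-standard vs patterns}. The core construction exploits the chain of subalgebras
\[
\mathfrak{so}(2)\subset\mathfrak{so}(3)\subset\mathfrak{so}(4)\subset\cdots\subset\mathfrak{so}(2n),
\]
alternating between orthogonal algebras of even and odd dimension, together with the fact that both restrictions $\mathfrak{so}(2k)\downarrow\mathfrak{so}(2k-1)$ and $\mathfrak{so}(2k-1)\downarrow\mathfrak{so}(2k-2)$ are multiplicity-free.

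First I would establish the branching rules. For $\mathfrak{so}(2k)\downarrow\mathfrak{so}(2k-1)$, a simple module $L_D(\lambda)$ with $\lambda=(\lambda_1,\ldots,\lambda_k)$ decomposes as $\bigoplus_\mu L_B(\mu)$ over all $\mu=(\mu_1,\ldots,\mu_{k-1})$ satisfying
\[
\lambda_1\geq\mu_1\geq\lambda_2\geq\mu_2\geq\cdots\geq\lambda_{k-1}\geq\mu_{k-1}\geq|\lambda_k|,
\]
and analogously the restriction $\mathfrak{so}(2k-1)\downarrow\mathfrak{so}(2k-2)$ produces simple components $L_D(\nu)$ with $\nu=(\nu_1,\ldots,\nu_{k-1})$ satisfying $\mu_1\geq\nu_1\geq\mu_2\geq\cdots\geq\mu_{k-1}\geq|\nu_{k-1}|$. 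Iterating down the chain produces a one-dimensional line in $L_D(\lambda)$ for every sequence of compatible highest weights, and such a sequence is precisely a $D$-type pattern (in the sense of \cite{Mol00a}) with top row $\lambda$. Applying the shift $\ell_{ij}\mapsto\ell_{ij}-j+\tfrac{3}{2}$, $\ell'_{ij}\mapsto\ell'_{ij}-j+\tfrac{3}{2}$ of Remark \ref{D-standard vs patterns} sends the weak interlacing to the strict inequalities of Definition \ref{D-standard} (the absolute value bounds becoming the two-sided constraints involving $-u'_{k,2}$) and shifts the top row to $\mat{\lambda}+\rho_D+\mat{1/2}$, producing the desired bijection between $D$-standard tableaux and a basis of $L_D(\lambda)$.

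The main obstacle lies not in the counting but in producing explicit formulas for the action of the generators of $\mathfrak{so}(2n)$ on the free vector space spanned by these tableaux. Molev's approach constructs lowering operators inside the Mickelsson-Zhelobenko algebra $Z(\mathfrak{so}(2n),\mathfrak{so}(2n-2))$ and uses them to extend the $\mathfrak{so}(2n-2)$-action to $\mathfrak{so}(2n)$ row by row; verifying the defining relations reduces by induction on $n$ to rational identities in the tableau entries that can be checked directly. Since the cardinality of $D$-standard tableaux with a prescribed top row agrees with Weyl's dimension formula for $L_D(\lambda)$, the module obtained in this way must indeed be isomorphic to $L_D(\lambda)$. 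Because this entire construction is carried out in detail in \cite{Mol07}, for the purposes of this paper one simply invokes that reference after observing the shift bijection.
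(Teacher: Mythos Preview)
Your proposal is correct and matches the paper's treatment: the paper does not prove this theorem at all but simply cites it from \cite{Mol07}, relying on the shift bijection of Remark~\ref{D-standard vs patterns} to pass between $D$-type patterns and $D$-standard tableaux. Your sketch of Molev's underlying construction (branching along the chain $\mathfrak{so}(2)\subset\cdots\subset\mathfrak{so}(2n)$ and the Mickelsson--Zhelobenko lowering operators) is accurate background, and your concluding remark that one ``simply invokes that reference after observing the shift bijection'' is exactly what the paper does.
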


\subsection{Type $C$} \label{subsec-cn}
Consider $I:=\{1,\ldots,n,-1,\ldots,-n\}\times \{1,\ldots,n,-1,\ldots,-n\}$, and square matrices of size $2n$ indexed by $I$. Also, for $(i,j)\in I$ consider $F_{ij}:=E_{ij}-\sgn(i)\sgn(j)E_{-j,-i}$, where $E_{ij}$ denotes the elementary matrix with $1$ at $(i,j)$-th position and $0$ otherwise. As a Lie algebra, $\mathfrak{sp}(2n)$ is generated by the elements $F_{-k,k}$,  $F_{k,-k}$ with $k\in\llbracket n \rrbracket $ and $F_{k-1,-k}$ with $k=2,\ldots,n$. We fix   $\mathfrak{h}:=\Span \{ F_{11}, \ldots, F_{nn}\}$ to be the Cartan subalgebra of $\mathfrak{sp}(2n)$, and denote by $\Delta = \Delta (\mathfrak{sp}(2n), \mathfrak h)$ the corresponding root system and $\mathcal Q_C = \mathbb Z \Delta$ the corresponding root lattice. By $\{\varepsilon_1,...,\varepsilon_n \}$ we denote the basis of $\mathfrak h^*$ dual to $\{ F_{11},...,F_{nn}\}$.  We will often identify $\mathfrak h^*$ with $\mathbb C^n$ via the correspondence $\sum_{i=1}^n \lambda_i \varepsilon_i \mapsto (\lambda_1,...,\lambda_n)$ and will denote both elements by $\mat{\lambda}$.

In explicit terms we have $\Delta = \{ \pm 2 \varepsilon_i, \pm \varepsilon_k \pm \varepsilon_l \; | \; k\neq l\}$. Also, the set of simple roots that lead to the dominance conditions \eqref{sp-dominant} is $\Pi = \{-2\varepsilon_1, \varepsilon_1-\varepsilon_2,...,\varepsilon_{n-1}-\varepsilon_n\}$, in particular,  $\rho_C = (-1,...,-n)$. The reflections of the Weyl group corresponding to the long roots $2\varepsilon_i$ will be denoted by $\tau_i$. In particlar,  $\tau_{1}(\mat{\lambda})=(-\lambda_1,\lambda_2,\ldots,\lambda_n)$.

\begin{definition}\label{Type C}
A \emph{(Gelfand-Tsetlin) tableau of type $C$} is an array of $n^2+n$ complex numbers arranged in the following form
$$
{\footnotesize T(L):=\begin{tabular}{cccccccccc}
\xymatrixrowsep{0.2cm}
\xymatrixcolsep{0.1cm}\xymatrix @C=0.5em {
&\ell_{n1}& &\ell_{n2}& &\cdots&&\cdots&\ell_{nn}\\
\ell'_{n1}& &\ell'_{n2}& &\cdots&&\cdots&\ell'_{nn}&\\
&\ell_{n-1,1}& &\ell_{n-1,2}& &\cdots&\ell_{n-1,n-1}&&\\
\ell'_{n-1,1}& &\ell'_{n-1,2}& &\cdots&\ell'_{n-1,n-1}&&&\\
&\vdots& &\vdots&&\vdots&&&\\
\vdots& &\vdots&&\vdots&&&&\\
&\ell_{21}& &\ell_{22}& &&&&\\
\ell'_{21}& &\ell'_{22}& &&&&&\\
&\ell_{11}& && &&&&\\
\ell'_{11}& && &&&&&\\
}\end{tabular}}
$$

\end{definition}

\begin{definition}\label{C-standard}
A tableau $T(L)$ of type $C$ will be called \emph{$C$-standard} if its entries satisfy the following conditions:
\begin{align}\label{condition 7}
-\frac{1}{2}\geq \ell'_{k1}\geq \ell_{k1}> \ell'_{k2}\geq \ell_{k2}> \cdots > \ell'_{k,k-1}\geq \ell_{k,k-1}> \ell'_{k,k}\geq \ell_{k,k},\  &k\in\llbracket n \rrbracket .\\\label{condition 8}
-\frac{1}{2}\geq \ell'_{k1}\geq \ell_{k-1,1}> \ell'_{k2}\geq \ell_{k-1,2}> \cdots> \ell'_{k,k-1}\geq \ell_{k-1,k-1}> \ell'_{kk},&\ 2\leq k\leq n
\end{align}

\end{definition}

\begin{theorem}[\cite{Mol07}, Theorem 9.6.2]\label{fd sp} Let $\mat{\lambda}=(\lambda_{1},\ldots,\lambda_{n})$ be a dominant $\mathfrak{sp}(2n)$-weight. The simple finite-dimensional module $L_C(\mat{\lambda})$ admits a basis parameterized by the set of all $C$-standard tableau $T(L)$  with top row $ \mat{\lambda} + \rho_{C} + \mat{1/2}$. Moreover, the action of the generators of $\mathfrak{sp}(2n)$ on the basis elements is given by:
\end{theorem}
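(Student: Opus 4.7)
The plan is to prove this via Molev's Yangian-theoretic construction. The key observation is that although the restriction $\mathfrak{sp}(2n) \downarrow \mathfrak{sp}(2n-2)$ is not multiplicity-free, the primed entries $\ell'_{k,j}$ resolve the branching multiplicities through an intermediate step, encoded algebraically by the twisted Yangian $Y^-(2n)$ of Olshanski. First I would set up the chain $Y^-(2) \subset Y^-(4) \subset \cdots \subset Y^-(2n)$ of subalgebras compatible with the symplectic tower; its Drinfeld-type generators provide lowering operators that descend from a highest weight vector of $L_C(\mat{\lambda})$ to produce a full candidate basis.

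I would next check that this basis is indeed parameterized by the $C$-standard tableaux with top row $\mat{\lambda} + \rho_{C} + \mat{1/2}$. For this one verifies inductively that the number of valid primed rows $(\ell'_{n,1},\ldots,\ell'_{n,n})$ interlacing the given $n$-th and $(n-1)$-th rows matches the Zhelobenko branching multiplicity for $\mathfrak{sp}(2n) \downarrow \mathfrak{sp}(2n-2)$; equivalently, the total tableaux count agrees with $\dim L_C(\mat{\lambda})$ as given by the Weyl dimension formula. With the combinatorial labeling of the basis in hand, translating the action of the Drinfeld generators of $Y^-(2n)$ into an action of the Chevalley-type generators $F_{-k,k}$, $F_{k,-k}$, $F_{k-1,-k}$ yields the explicit formulas claimed in the statement. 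The shift of the top row by $\rho_{C} + \mat{1/2}$ is the standard device that turns the nonstrict inequalities of Molev's $C$-patterns (see Remark \ref{D-standard vs patterns} for the analogous $D$-case) into the strict/nonstrict pattern in \eqref{condition 7}--\eqref{condition 8}.

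The final step would be to confirm the $\mathfrak{sp}(2n)$ relations directly on the combinatorial side: the Cartan action reads off from row-sum differences, so the nontrivial verifications are $[F_{k,-k},F_{-k,k}]$ together with the Serre-type relations linking adjacent levels. The main obstacle I expect is precisely this last verification after translation to tableau language. The primed entries introduce rational matrix coefficients with denominators of the form $\ell_{k,i}-\ell'_{k,j}$, and the boundary condition $-\tfrac{1}{2}\geq \ell'_{k,1}$ encodes the symplectic duality, which breaks the clean interlacing structure familiar from type $A$. Managing these rational identities together with the various half-integer shifts is where the calculation becomes delicate; the cleanest route is to leverage the PBW property of $Y^-(2n)$ and the Sklyanin determinant identities to reduce the check to a small number of universal relations rather than verifying cases by hand.
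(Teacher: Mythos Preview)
The paper does not give its own proof of this statement: Theorem~\ref{fd sp} is quoted verbatim from \cite[Theorem~9.6.2]{Mol07} and used as a black box throughout the paper. So there is nothing in the paper to compare your proposal against.

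That said, your outline is broadly faithful to Molev's actual argument in the cited reference: the construction does proceed via the twisted Yangian $Y^-(2)$ and the centralizer $Y^-(2n)\to U(\mathfrak{sp}(2n))^{\mathfrak{sp}(2n-2)}$, the primed row does resolve the Zhelobenko branching multiplicity, and the shift by $\rho_C+\mat{1/2}$ is exactly the device that converts Molev's nonstrict patterns into the strict/nonstrict conditions \eqref{condition 7}--\eqref{condition 8}. Where your sketch is thinnest is the ``final step'': Molev does \emph{not} verify the $\mathfrak{sp}(2n)$ relations directly on the tableau side after the fact; rather, the formulas are derived from the explicit action of the Yangian generators on the lowering-operator basis, so the relations hold automatically because one is computing inside an honest $\mathfrak{sp}(2n)$-module from the start. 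Attempting a direct combinatorial verification of the Serre relations for the formulas \eqref{Formulas sp}--\eqref{operator D} would be substantially harder than what Molev actually does, and your remark about ``reducing to a small number of universal relations via the PBW property and Sklyanin determinant identities'' is too vague to count as a plan.
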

\begin{align}\label{Formulas sp}
F_{kk}T(L)=&\left(2\sum_{i=1}^{k}\ell'_{ki}-\sum_{i=1}^{k}\ell_{ki}-\sum_{i=1}^{k-1}\ell_{k-1,i}+k-\frac{1}{2}\right)T(L),\\\label{Formulas sp1}
F_{k,-k}T(L)=&\sum\limits_{i=1}^{k}A_{ki}(L)T(L+\delta^{(ki)'}),\\\label{Formulas sp2}
F_{-k,k}T(L)=&\sum\limits_{i=1}^{k}B_{ki}(L)T(L-\delta^{(ki)'}),\\\label{Formulas sp3}
F_{k-1,-k}T(L)=&\sum\limits_{i=1}^{k-1}C_{ki}(L)T(L-\delta^{(k-1,i)})\\
&+\sum\limits_{i=1}^{k}\sum\limits_{j,m=1}^{k-1}D_{kijm}(L)T(L+\delta^{(ki)'}+\delta^{(k-1,j)}+\delta^{(k-1,m)'}),\\\label{Formulas sp4}
2F_{k,k-1}T(L)=&\ \big[F_{k-1,-k}, F_{1-k,k-1}\big]T(L),\\\label{Formulas sp5}
2F_{k-1,k}T(L)=&\ \big[F_{k-1,-k}, F_{-k,k}\big]T(L),
\end{align}
 where
\begin{align}\label{operator A}
A_{ki}(L)&=\prod\limits_{a=1,a\neq i}^{k}\frac{1}{\ell'_{ka}-\ell'_{ki}},\\\label{operator B}
B_{ki}(L)&=2A_{ki}(L)(2\ell'_{ki}-1)\prod\limits_{a=1}^{k}(\ell_{ka}-\ell'_{ki})\prod\limits_{a=1}^{k-1}(\ell_{k-1,a}-\ell'_{ki}),\\\label{operator C}
C_{ki}(L)&=\frac{1}{(2\ell_{k-1,i}-1)}\prod\limits_{a=1,a\neq i}^{k-1}\frac{1}{(\ell_{k-1,i}-\ell_{k-1,a})(\ell_{k-1,i}+\ell_{k-1,a}-1)},\\\label{operator D}
D_{kijm}(L)&=A_{ki}(L)A_{k-1,m}(L)C_{kj}(L)\prod\limits_{a\neq i}^{k}(\ell^2_{k-1,j}-\ell^{'2}_{k,a})\prod\limits_{a\neq m}^{k-1}(\ell^2_{k-1,j}-\ell^{'2}_{k-1,a}).
\end{align}

\begin{remark} \label{rem-thm-d}
There is an analogous to the theorem above in the $D$-type case. Recall from Theorem \ref{shifted D} that for a dominant $\mathfrak{so}(2n)$-weight $\mat{\eta} \in \mathbb C^n$, $L_D(\mat{\eta})$ admits a basis parameterized by the set of all $D$-standard tableau $T(L)$  with top row $ \mat{\eta} + \rho_{D} + \mat{1/2}$. The action of the generators of $\mathfrak{so}(2n)$ on these $T(L)$ can  be written explicitly and involve some rational functions of $\mat{\eta}$. To keep this paper brief, we will not include these formulas and will refer to them as \emph{GT-formulas of type $D$}. We refer the reader to Theorem 9.6.8 in \cite{Mol07} for the missing details.
\end{remark}

\begin{definition}\label{set of standard tableaux}
Let $\mat{\lambda}\in \mathbb C^n$ be a dominant $\mathfrak{sp}(2n)$-weight, and $\mat{\eta}\in \mathbb C^n$ be a dominant $\mathfrak{so}(2n)$-weight. We denote by $C_{st}^{\mat{\lambda}}$ the set of all $C$-standard tableaux with top row $\mat{\lambda}+ \rho_C + \mat{1/2}$, and  by $D_{st}^{\mat{\eta}}$ the set of all $D$-standard tableaux with top row $\mat{\eta}+ \rho_D + \mat{1/2}$.
\end{definition}

\begin{remark}\label{center is polynomial in lambda}
Given  an $\mathfrak{sp}(2n)$-weight $\mat{\lambda}=(\lambda_{1},\ldots,\lambda_{n})$, any element $z$ of the center $Z(U(\mathfrak{sp}(2n)))$ of $U(\mathfrak{sp}(2n))$  acts on $L_{C}(\mat{\lambda})$ via a multiplication by a scalar. By the Harish-Chandra Theorem, this scalar is a polynomial $p_{z}(\mat{\lambda})$ in $\lambda_{1},\ldots,\lambda_{n}$. More generally, for a simple $\mathfrak{sp}(2n)$-module $M$, there is $\mat{\lambda} \in \mathbb C^n$ so that $z$ acts on $M$ via $p_{z}(\mat{\lambda})$. In the latter case we call $\chi_{\smat{\lambda}\mat{}} : Z(U(\mathfrak{sp}(2n)))  \to \mathbb C$, $\chi_{\smat{\lambda}\mat{}} (z) = p_{z}(\mat{\lambda})$, the central character of $M$ (and of $L_C(\mat{\lambda})$). The same result holds in the case of $\mathfrak{so}(2n)$, and we will use the same notation (i.e., $p_{z}$ for the corresponding polynomial). 
\end{remark}

 We will make use of the following result, the proof of which relies on the fact that the set $\bigcup_{\mat{\lambda}} C_{st}^{\mat{\lambda}}$ (union taken over all $\mathfrak{sp}(2n)$-dominant $\mat{\lambda} \in \mathbb C^n$) is Zariski dense in $\mathbb C^{n^2+n}$. For a rigorous proof of a more general result in the case of $\mathfrak{gl}(n)$, see Lemma 3.4 in \cite{Zad17}. Henceforth, for convenience we will write $f(W)$ for a polynomial $f$ in $n^2+n$ variables evaluated at the entries of a tableau $T(W)$ of type $C$.
 
\begin{lemma}\label{density lemma} Let $f(\mat{x})$ be a polynomial in $n^2+n$ variables such that for any dominant $\mathfrak{sp}(2n)$-weight $\mat{\lambda}$, $f(W)=0$ 
for all $T(W)\in C_{st}^{\mat{\lambda}}$. Then $f=0$. 
\end{lemma}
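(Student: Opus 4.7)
My strategy is to reduce the lemma to the following standard density principle: a polynomial in $N$ variables vanishing on a Cartesian product $S_1\times\cdots\times S_N$ with each $S_i\subset\mathbb{C}$ infinite must be the zero polynomial (by iterated application of the one-variable identity theorem). Accordingly, I would prove that the set $T:=\bigcup_{\smat{\lambda}\mat{}} C_{st}^{\smat{\lambda}\mat{}}\subseteq\mathbb{C}^{n^2+n}$ is Zariski dense by exhibiting, after an invertible affine change of coordinates, an infinite product contained in $T$.

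The first step is to pass to ``gap coordinates''. Writing $\ell_{n,i}=\lambda_{i}-i-\tfrac{1}{2}$ and recording the slack in each betweenness inequality by nonnegative (half-)integer variables
\[
v_{k,i}:=\ell'_{k,i}-\ell_{k,i},\qquad w_{k,i}:=\ell_{k-1,i}-\ell'_{k,i+1}-\tfrac{1}{2},\qquad v'_{k,i}:=\ell'_{k,i}-\ell_{k-1,i},
\]
(with the appropriate index ranges dictated by \eqref{condition 7}--\eqref{condition 8}), one obtains an integer-affine, invertible change of variables between the entries of $T(L)$ and the tuple $(\mat{\lambda};\mat{v},\mat{w},\mat{v}')$. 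Under this change, membership in $T$ becomes the conjunction of: $\mat{\lambda}$ is a dominant $\mathfrak{sp}(2n)$-weight; all gap variables lie in $\tfrac{1}{2}\mathbb{Z}_{\geq 0}$; and the single boundary constraint $-\tfrac{1}{2}\geq\ell'_{k,1}$ for each $k$. A polynomial $f$ vanishes on $T$ iff the transformed polynomial $\tilde f$ vanishes on this image.

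The second step is the decoupling: I would fix the dominant weight $\mat{\lambda}$ to be ``deeply spread out'', i.e.\ take $\lambda_{n}\ll\lambda_{n-1}\ll\cdots\ll\lambda_{1}\ll 0$. Provided the gaps $\lambda_{i-1}-\lambda_{i}$ and $-\lambda_{1}$ are all at least some bound depending on $N$, the inequality $-\tfrac{1}{2}\geq\ell'_{k,1}$ becomes automatic and the remaining constraints reduce to $v_{k,i},w_{k,i},v'_{k,i}\in\{0,\tfrac{1}{2},1,\ldots,N\}$ independently. Letting $N\to\infty$ (by choosing $\mat{\lambda}$ correspondingly) produces, inside the image of $T$, a Cartesian product of $n^{2}+n$ infinite subsets of $\tfrac{1}{2}\mathbb{Z}$. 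The density principle then forces $\tilde f\equiv 0$, and consequently $f\equiv 0$.

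The main obstacle, as in the $\mathfrak{gl}(n)$ analog of \cite{Zad17}, is purely combinatorial: carefully choosing the gap coordinates and the asymptotic regime of $\mat{\lambda}$ so that the cascading inequalities \eqref{condition 7}--\eqref{condition 8} genuinely decouple into independent nonnegativity conditions on the $v,w,v'$. Once this decoupling is verified on one explicit family of dominant $\mat{\lambda}$, the rest of the argument is a routine application of Zariski density.
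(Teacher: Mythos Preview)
Your proposal is correct and is exactly the approach the paper intends: the paper does not write out a proof but simply asserts that $\bigcup_{\smat{\lambda}\mat{}} C_{st}^{\smat{\lambda}\mat{}}$ is Zariski dense in $\mathbb{C}^{n^2+n}$, referring to Lemma~3.4 of \cite{Zad17} for the analogous $\mathfrak{gl}(n)$ argument carried out via precisely this gap-coordinate decoupling. The only bookkeeping caveat is that your three families $v,w,v'$ as literally written overcount the $n^{2}$ free parameters below the top row, but you already flag that the index ranges must be chosen so the affine change is invertible, so this is not a genuine gap.
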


\begin{example}
In the following, given $a,b\in \mathbb C$, we use an arrow from $a$ to $b$ if $a-b\in\mathbb{Z}_{\geq 0}$, and a dotted arrow from $a$ to $b$ if $a-b\in\mathbb{Z}_{>0}$. With this convention, for $n=4$, the Relations (\ref{condition 5}, \ref{condition 6}, \ref{condition 7}, \ref{condition 8}) can be described by directed graphs as follows:

\medskip

\begin{center}

\begin{tabular}{|c|}
\hline  $C$-standard relations for $\mathfrak{sp}(8)$\\ \hline
$\begin{tabular}{cccccccccc}
\xymatrixrowsep{0.3cm}
\xymatrixcolsep{0.1cm}\xymatrix @C=0.5em {
&&\ell_{41}\ar@{.>}[rd]& &\ell_{42}\ar@{.>}[rd]&&\ell_{43}\ar@{.>}[rd]&&\ell_{44}\\
&\ell'_{41}\ar[rd]\ar[ru]& &\ell'_{42}\ar[rd]\ar[ru]&&\ell'_{43}\ar[rd]\ar[ru]&&\ell'_{44}\ar[ru]&\\
 -\frac{1}{2}\ar[rd]\ar[ru]&&\ell_{31}\ar@{.>}[rd]\ar@{.>}[ru]& &\ell_{32}\ar@{.>}[rd]\ar@{.>}[ru]&&\ell_{33}\ar@{.>}[ru]&&\\
& \ell'_{31}\ar[rd]\ar[ru]& &\ell'_{32}\ar[rd]\ar[ru]&&\ell'_{33}\ar[ru]&&&\\
 -\frac{1}{2}\ar[rd]\ar[ru]&&\ell_{21}\ar@{.>}[rd]\ar@{.>}[ru]& &\ell_{22}\ar@{.>}[ru]& &&&\\
& \ell'_{21}\ar[rd]\ar[ru]& &\ell'_{22}\ar[ru]& &&&&\\
 -\frac{1}{2}\ar[rd]\ar[ru]&&\ell_{11}\ar@{.>}[ru]& && &&&\\
&\ell'_{11}\ar[ru]&& && &&&\\
}\end{tabular}$\\ \hline
 	\end{tabular}
	\end{center}

 \medskip
\begin{center} \hspace{1cm}
	\begin{tabular}{|c|}
\hline $D$-standard relations for $\mathfrak{so}(8)$\\ \hline $\begin{tabular}{cccccccccc}
\xymatrixrowsep{0.3cm}
\xymatrixcolsep{0.1cm}\xymatrix @C=0.5em {
&\ell_{41}\ar@{.>}[rd]& &\ell_{42}\ar@{.>}[rd]&&\ell_{43}\ar@{.>}[rd]&&\ell_{44}\\
 -\ell'_{42}\ar[rd]\ar[ru]& &\ell'_{42}\ar[rd]\ar[ru]&&\ell'_{43}\ar[rd]\ar[ru]&&\ell'_{44}\ar[ru]&\\
&\ell_{31}\ar@{.>}[rd]\ar@{.>}[ru]& &\ell_{32}\ar@{.>}[rd]\ar@{.>}[ru]&&\ell_{33}\ar@{.>}[ru]&&\\
 -\ell'_{32} \ar[rd]\ar[ru]& &\ell'_{32}\ar[rd]\ar[ru]&&\ell'_{33}\ar[ru]&&&\\
&\ell_{21}\ar@{.>}[rd]\ar@{.>}[ru]& &\ell_{22}\ar@{.>}[ru]& &&&\\
-\ell'_{22}\ar[rd]\ar[ru]& &\ell'_{22}\ar[ru]& &&&&\\
&\ell_{11}\ar@{.>}[ru]& && &&&\\
&& && &&&\\
}\end{tabular}$\\ \hline
 	\end{tabular}
\end{center}

\end{example}

\section{Generalities on bounded $\mathfrak{sp}(2n)$-modules}

In this section we collect some important definitions and properties of the bounded modules of $\mathfrak{sp}(2n)$. For rigorous proofs and more details we refer the reader to \cite{Mat00} and \cite{GS06}.

\subsection{Weight and bounded modules} \label{subsec-bounded}
 A \emph{weight  module} $M$ is an $\mathfrak{sp}(2n)$-module that is semisimple as $\mathfrak h$-module. The $\mathfrak h$-isotypic components of $M$ are the \emph{weight spaces} of $M$: we denote them by $M^{\lambda}$ for $\lambda \in \mathfrak h^*$.  Every weight module $M$ has a well-defined support:
       $$ \Supp  M = \{\mu \in \mathfrak h^* \; | \; M^{\mu} \neq 0 \}. $$

A \emph{weight}  module is \emph{bounded} if the dimension of any weight space of $M$ is less or equal to  $N$ for some fixed $N \in \mathbb Z_{\geq 0}$. The \emph{degree} $d (M)$ of a bounded weight module $M$ equals the maximum value of the weight multiplicities  of $M$. The essential support of a bounded module $M$ is 
  $$ \Supp_{\rm ess}  M = \{\mu \in \Supp  M \; | \; \dim M^{\mu} = d(M) \}. $$
For an infinite-dimensional bounded $\mathfrak{sp}(2n)$-module $M$, we say that $M$ has a \emph{cone} $\mathcal C (M) \subset \mathcal Q_C$ if:
$$
\mat{\lambda}' + \mathcal C(M) \subset \mbox{Supp}_{\rm ess}M \subset  \mbox{Supp} M \subset \mat{\lambda}'' + \mathcal C(M)
$$
for some weights $\mat{\lambda}' , \mat{\lambda}''$ in  $\mbox{Supp} M$. 

The infinite-dimensional simple bounded modules have cones and in order to describe these cones, we introduce some notation. We set $\mathcal C_k := \left\{ x \in \mathcal Q_C \; | \; \langle x,\varepsilon_k \rangle \geq 0 \right\} $ and for $\Sigma \subseteq \llbracket n \rrbracket $, we set $\mathcal C_{\Sigma} = \bigcap_{k \in \Sigma} \mathcal C_k$. If $\Sigma,\Sigma'$ are disjoint subsets of $\llbracket n \rrbracket $, we set 
$$ \mathcal C_{\Sigma,\Sigma'} = \mathcal C_{\Sigma}\cap \left(- \mathcal C_{\Sigma'}\right).
$$
Recall that for $\mat{\mu} \in \mathbb C^n$, ${\rm Hf} (\mat{\mu})= {\rm Int} ({\mat{\mu}}+ \mat{1/2})$. Also, for $\overline{\mat{\nu}} = \mat{\nu} + \mathcal Q_C \in \mathbb{C} ^{n} /\mathcal Q_C$ we define ${\rm Hf} (\overline{\mat{\nu}}) = {\rm Hf} (\mat{\nu})$.

\begin{example} \label{sp-2n-sigma}
In this example we provide explicit realization of all simple bounded infinite-dimensional $\mathfrak{sp}(2n)$-modules of central character $\sigma = \chi_{\smat{1/2}\mat{}}$. Every such module is uniquely determined by a coset $\overline{\mat{\nu}} = \mat{\nu} + \mathcal Q_C$ in  $\mathbb C^n/\mathcal Q_C$ and a subset $\Sigma \subset \overline{\mat{\nu}}$.

Let $\mathcal D (n)$ be the n$th$ Weyl algebra, i.e. the algebra of polynomial differential operators on $\mathcal O_n = \mathbb C [t_1,...,t_n]$. Then there is a homomorphism 
$$
\varphi: U(\mathfrak{sp}(2n)) \to \mathcal D (n)
$$
of associative algebras such that $\varphi(F_{-k,k}) = \frac{1}{\sqrt{2}} \partial_{t_k}^2$ and $\varphi(F_{k,-k}) = \frac{1}{\sqrt{2}} t_k^2$, see for example \S 5 in \cite{GS06}. The image of $\varphi$ is $\mathcal D (n)^{\rm ev}$,  the subspace of $\mathcal D (n)$ spanned by $t^{\alpha} \partial_t^{\beta}$ with $|\alpha| + |\beta|$ even. 

The pullback through $\varphi $ of 
the space  $
\mathcal F^{\rm ev} =  \mathbb C[t_1,...,t_n]^{\rm ev}$
of polynomials of even degree (respectively, of the space  $
\mathcal F^{\rm odd} =  \mathbb C[t_1,...,t_n]^{\rm odd}$
of polynomials of odd degree) is a simple bounded $\mathfrak{sp} (2n)$-module of degree $1$, central character $\sigma$, and cone $\mathcal C_{\llbracket n \rrbracket , {\emptyset}}$.
Then for any $\overline{\mat{\nu}} = \mat{\nu} + \mathcal Q_C$ in  $\mathbb C^n/\mathcal Q_C$, the pullback of 
$$
\mathcal F(\overline{\mat{\nu}}) =  \Span \{t^{\smat{\nu} + \smat{z}} \; | \; z_i \in \mathbb Z, i \in {\rm Int} (\mat{\nu}), z_j  \in \mathbb Z_{\geq 0}, j\notin {\rm Int}(\mat{\nu}), z_1+\cdots + 
z_n \in 2\mathbb Z \}$$
is a simple bounded $\mathfrak{sp} (2n)$-module of degree $1$,  cone $\mathcal C_{ {\rm Int }(\overline{\mat{\nu}}), \emptyset}$, $\Supp \mathcal F(\overline{\mat{\nu}}) \subset  \overline{\mat{\nu} + \mat{\frac{1}{2}}}$,   and $\chi_{\mathcal F(\overline{\mat{\nu}})} = \chi_\sigma$. In particular, $\mathcal F^{\rm ev} = \mathcal F (\overline{\mat{0}}) \simeq L_C(\mat{1/2}\mat{})$ and $\mathcal F^{\rm odd} = \mathcal F (\overline{\varepsilon}_1) \simeq L_C(\mat{1/2}\mat{} + \varepsilon_1)$. 

Finally, for a subset  $\Sigma$ of ${\rm Int}(\overline{\mat{\nu}})$, we may twist the module $\mathcal F (\overline{\mat{\nu}})$ by the automorphism $\theta_{\Sigma}$ of $\mathcal D (n)$ defined by $t_i \mapsto \partial_{t_i}$, 
$\partial_{t_i} \mapsto -t_i$, $i \in \Sigma$; 
$t_j \mapsto t_j$,  $\partial_{t_j} \mapsto \partial_{t_j}$, $j \notin \Sigma$, and obtain a bounded module $\mathcal F (\overline{\mat{\nu}}, \Sigma)$ of degree $1$, central character $\sigma$, and cone $\mathcal C_{{\rm Int}(\overline{\mat{\nu}})\setminus \Sigma, \Sigma}$. In particular, $\mathcal F (\overline{\mat{\nu}}, \emptyset) = \mathcal F (\overline{\mat{\nu}})$. The set of all $\mathcal F (\overline{\mat{\nu}}, \Sigma)$ is the complete set of isomorphism classes of simple bounded infinite-dimensional $\mathfrak{sp}(2n)$-modules of central character $\sigma$.
\end{example}

\begin{proposition} \label{prop-class-sp}
Let $\overline{\mat{\nu}}\in \mathbb{C} ^{n} /\mathcal Q_C$,  $\mat{\lambda}\in(\frac{1}{2}+\mathbb{Z})^{n}$ be a dominant $\mathfrak{so}(2n)$-weight, and $\Sigma \subset {\rm Hf} ( \overline{\mat{\nu}})$.
\begin{itemize}
\item[(i) ] There is a unique up to isomorphism simple bounded  $\mathfrak{sp}(2n)$-module $S = S(\chi_{\smat{\lambda} + \smat{1}\mat{}}, \overline{\mat{\nu}}, \Sigma)$ with central character  $\chi_S = \chi_{\smat{\lambda} + \smat{1}\mat{}}$, support $\Supp S \subset\overline{\mat{\nu}} $, and cone $\mathcal C_S = \mathcal C_{{\rm Hf}(\overline{\mat{\nu}}) \setminus \Sigma, \Sigma}$.

\item[(ii)] If $S(\chi_{\smat{\lambda} + \smat{1}\mat{}}, \overline{\mat{\nu}}, \Sigma)$ is the module defined in (i), then every infinite-dimensional simple bounded  $\mathfrak{sp}(2n)$-module is isomorphic to some  module $S(\chi_{\smat{\lambda} + \smat{1}\mat{}}, \overline{\mat{\nu}}, \Sigma)$. In particular, $\mathcal F (\overline{\mat{\nu}}, \Sigma) \simeq S(\chi_{\sigma}, \overline{\mat{\nu} + \mat{1/2}\mat{}}, \Sigma)$.

\item[(iii)] $\deg S(\chi_{\smat{\lambda} + \smat{1}\mat{}}, \overline{\mat{\nu}}, \Sigma)  = \frac{1}{2^{n-1}} \dim L_D(\mat{\lambda}) $.
\end{itemize}
\end{proposition}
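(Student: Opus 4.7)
The plan is to assemble the classification from Mathieu's theory of coherent families \cite{Mat00} together with the bounded-module description of \cite{GS06}. Every infinite-dimensional simple bounded $\mathfrak{sp}(2n)$-module $S$ is cuspidal: the long root vectors $F_{\pm 2\varepsilon_i}$ must act injectively on every weight space of $S$, since otherwise their common kernel would generate a proper submodule of $S$. Consequently $S$ belongs to a unique semisimple coherent family $\mathcal{E}(\chi_S)$, and the simple constituents of such a family are parametrized, via Mathieu's twisted localization along the commuting $\mathfrak{sl}_2$-triples attached to the long roots $\pm 2\varepsilon_i$, by pairs consisting of a support coset in $\mathbb{C}^n/\mathcal{Q}_C$ and a twist subset $\Sigma$.

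For part (i), I would construct $S(\chi_{\smat{\lambda}+\smat{1}\mat{}}, \overline{\mat{\nu}}, \Sigma)$ by starting with a simple highest weight module in $\mathcal{E}(\chi_{\smat{\lambda}+\smat{1}\mat{}})$, applying twisted localization with respect to the long roots indexed by $\Sigma$, and then shifting support by the coset $\overline{\mat{\nu}}$. Uniqueness is built into the coherent family formalism: the coset $\overline{\mat{\nu}}$ records $\Supp S \bmod \mathcal{Q}_C$, while $\Sigma$ records the directions in which $\Supp_{\rm ess} S$ is bounded above rather than below. For part (ii), the key input is that the central character of an infinite-dimensional simple bounded $\mathfrak{sp}(2n)$-module is necessarily of the form $\chi_{\smat{\lambda}+\smat{1}\mat{}}$ with $\mat{\lambda}\in (\tfrac12+\mathbb{Z})^n$ dominant for $\mathfrak{so}(2n)$; this is the classical half-integrality obstruction established in \cite{GS06}, and it reduces (ii) to (i). The identification $\mathcal{F}(\overline{\mat{\nu}},\Sigma) \simeq S(\chi_\sigma, \overline{\mat{\nu}+\mat{1/2}\mat{}}, \Sigma)$ is then a direct comparison of central character, support coset and cone, read off from Example \ref{sp-2n-sigma}: the oscillator module $\mathcal{F}(\overline{\mat{\nu}},\Sigma)$ has central character $\sigma = \chi_{\smat{1/2}\mat{}}$ (so $\mat{\lambda} = -\mat{1/2}$), support inside $\overline{\mat{\nu}+\mat{1/2}\mat{}}$, and cone $\mathcal{C}_{\mathrm{Int}(\overline{\mat{\nu}})\setminus \Sigma, \Sigma}$.

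For part (iii), the degree is a coherent-family invariant, so it suffices to compute it on a single convenient simple constituent. Using the oscillator realization of the family $\mathcal{E}(\chi_{\smat{\lambda}+\smat{1}\mat{}})$, a weight space in the essential support admits a natural basis indexed by $D$-standard tableaux with top row $\mat{\lambda}+\rho_D+\mat{1/2}$, which by Theorem \ref{shifted D} number $\dim L_D(\mat{\lambda})$. The factor $1/2^{n-1}$ then appears because these tableaux are distributed equally among $2^{n-1}$ cosets related by the sign-change symmetry of the leftmost column of the ambient $C$-type tableau, and exactly one coset contributes to any fixed weight space of $S$. The main obstacle is the clean derivation of this $2^{n-1}$-fold redundancy; it will be cleanest to establish after the tableau basis of $V(\mat{\mu},\mat{\lambda})$ is constructed in Section 4, so I would defer the detailed bookkeeping until then. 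Pending that, an alternative route is a direct Weyl character comparison between the $C_n$ coherent family and the finite-dimensional $\mathfrak{so}(2n)$-module $L_D(\mat{\lambda})$, matching the $\rho_C$ versus $\rho_D$ shifts.
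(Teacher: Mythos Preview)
Your overall strategy of assembling the result from Mathieu's coherent-family classification is valid and close in spirit to what the paper cites, but the paper's actual argument takes a different and shorter route: rather than working inside the general coherent family $\mathcal E(\chi_{\smat{\lambda}+\smat{1}\mat{}})$ and building $S$ by twisted localization, it applies the translation functor $\theta_\sigma^\chi(\bullet)=\mathrm{pr}^\chi(\bullet\otimes L_C(\mat\lambda+\mat{1/2}))$, an equivalence of bounded-module categories that preserves cones and support cosets, to reduce to the single central character $\sigma=\chi_{\smat{1/2}\mat{}}$. At $\sigma$ the full list of simple bounded modules is already written down explicitly as the oscillator modules $\mathcal F(\overline{\mat\nu},\Sigma)$ in Example~\ref{sp-2n-sigma}, so (i) and (ii) are immediate. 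Your direct approach would also succeed, but it re-runs the coherent-family machinery at each $\chi$; the translation-functor reduction trades that for one concrete case.

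Your opening sentence contains a genuine error: not every infinite-dimensional simple bounded $\mathfrak{sp}(2n)$-module is cuspidal. The modules $\mathcal F(\overline{\mat\nu},\Sigma)$ with $\Sigma\neq\emptyset$ in Example~\ref{sp-2n-sigma}, and more generally any infinite-dimensional simple bounded highest weight module, have $F_{-k,k}$ locally nilpotent for $k\in\Sigma$. The argument ``their common kernel would generate a proper submodule'' fails for exactly the reason it fails in a simple highest weight module: the kernel of a raising operator may generate the entire module. You do not need cuspidality anyway---Mathieu embeds every simple bounded module into a semisimple coherent family regardless---so the sentence should simply be deleted.

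For part (iii), your proposal to defer to the tableau count of Section~4 is not circular but is out of expository order: one would first need (i)--(ii) here to identify $V(\mat\mu,\mat\lambda,\Sigma)$ with $S(\chi_{\smat\lambda+\smat1\mat{}},\ldots)$ in Theorem~\ref{thm-s-v}, then read the degree off Theorem~\ref{number of D-standard tableaux}, and only then come back to fill in (iii). The paper sidesteps this by citing Theorem~12.2 of \cite{Mat00}, where the degree formula $\tfrac{1}{2^{n-1}}\dim L_D(\mat\lambda)$ is already established on the coherent-family side; that is the cleanest option at this point in the exposition.
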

\begin{proof} The statements follow from the classification of irreducible semisimple coherent families in \S 12 of \cite{Mat00}, but for reader's convenience, we outline the proof. 

 Let $\chi = \chi_{\smat{\lambda} + \smat{1}\mat{}}$ and $\sigma = \chi_{\smat{1/2}\mat{}}$. The translation functor $\theta_{\sigma}^{\chi} (\bullet) = \mbox{pr}^{\chi} (\bullet \otimes L_C(\mat{\lambda} + \mat{1/2}\mat{}))$ gives an equivalence of categories between the bounded $\mathfrak{sp}(2n)$-modules with  central character $\sigma$ and  the bounded $\mathfrak{sp}(2n)$-modules with  central character $\chi$. Moreover, the translation functor preserves the cone   $\mathcal C_S$ of a simple module $S$. Hence, for the sake of simplicity we may assume that $\chi = \sigma$, i.e. that $\mat{\lambda} = \mat{-1/2}$. But the simple bounded  $\mathfrak{sp}(2n)$-modules with central character $\sigma$ are explicitly described in Example \ref{sp-2n-sigma}, which completes the proof of (i) and (ii). For part (iii), see for example Theorem 12.2 in \cite{Mat00}. \end{proof}

\subsection{Twisted localization and injectives}
We first recall some properties of the twisted localization functor in general. Let $\mathcal U$ be an associative unital algebra and $\mathcal H$ be a commutative subalgebra of $\mathcal U$. We assume in addition that $\mathcal H =  \C [\mathfrak h]$ for some vector space ${\mathfrak h}$, and that 
$$\mathcal U=\bigoplus_{\mu\in {{\mathfrak h}^*}}\mathcal U^\mu,$$
where
$$\mathcal U^\mu=\{x\in\mathcal U\ |\  [h,x]=\mu(h)x, \forall h\in\mathfrak h\}$$
is the $\mu$-weight space of $\mathcal U$.

Let  $a$ be an ad-nilpotent element of $\mathcal U$. Then the set $\langle a \rangle = \{ a^n \; | \; n \geq 0\}$ is an Ore subset of $\mathcal U$  which allows us to define the  $\langle a \rangle$-localization $D_{\langle a \rangle} \mathcal U$ of $\mathcal U$. For a $\mathcal U$-module $M$  by $D_{\langle a \rangle} M = D_{\langle a \rangle} {\mathcal U} \otimes_{\mathcal U} M$ we denote the $\langle a \rangle$-localization of $M$. Note that if $a$ is injective on $M$, then $M$ is isomorphic to a submodule of $D_{\langle a \rangle} M$. In the latter case we will identify $M$ with that submodule.

We next recall the definition of the generalized conjugation of $D_{\langle a \rangle} \mathcal U$ relative to $x \in {\mathbb C}$, see  Lemma 4.3 in \cite{Mat00}. This is the automorphism  $\phi_x : D_{\langle a \rangle} \mathcal U \to D_{\langle a \rangle} \mathcal U$ given by $$\phi_x(u) = \sum_{i\geq 0} \binom{x}{i} \ad (a)^i (u) a^{-i}.$$ If $x \in \mathbb Z$, then $\phi_x(u) = a^xua^{-x}$. With the aid of $\phi_x$ we define the twisted module $\Phi_x(M) = M^{\phi_x}$ of any  $D_{\langle a \rangle} \mathcal U$-module $M$. Finally, we set $D_{\langle a \rangle}^x M = \Phi_x D_{\langle a \rangle} M$ for any $\mathcal U$-module $M$ and call it the \emph{twisted localization} of $M$ relative to $a$ and $x$. We will use the notation $a^x\cdot m$  (or simply $a^x m$) for the element in  $D_{\langle a \rangle}^x M$ corresponding to $m \in D_{\langle a \rangle} M$. In particular, the following formula holds in $D_{\langle a \rangle}^{x} M$:
$$
u (a^{x} m) = a^{x} \left( \sum_{i\geq 0} \binom{-x}{i} \ad (a)^i (u) a^{-i}m\right)
$$
for $u \in \mathcal U$, $m \in D_{\langle a \rangle}  M$.

We will apply the twisted localization functor for  $({\mathcal U}, {\mathcal H})$ in the following two cases:

 \noindent (i) $\mathcal U = {\mathcal D}(n)^{\rm ev}$,   $\mathfrak h = \bigoplus_{i=1}^n \left( \mathbb C \medskip t_i\partial_{t_i} \right)$;

 \noindent (ii) $\mathcal U = U(\mathfrak{sp}(2n))$ with our fixed $\mathfrak h$.
 
In  case (i), for simplicity,  we will use the following notation:
$D_i = D_{\langle t_i^2 \rangle}$ and  $D_{\Gamma} = \prod_{i \in \Gamma} D_i$, where $\Gamma$ is a subset of $\llbracket n \rrbracket $. If  $\mat{\mu}=\sum_{k \in \Gamma} \mu_{k}  \varepsilon_k \in \mathfrak h^*$ (equivalently, $\mat{\mu} \in  \mathbb C^n)$, then we write $D_{\Gamma}^{\mu} = \prod_{k \in \Gamma} D^{\mu_k}_{k}$.

In case (ii), we will often consider the following setting. If $\Gamma$ is a subset of $\llbracket n \rrbracket $, and $\mat{\mu} \in  \mathbb C^n$, then $D_{\Gamma} = \prod_{k \in \Gamma} D_{\langle F_{k,-k}\rangle}$ and $D_{\Gamma}^{\mu} = \prod_{k \in \Gamma} D^{\mu_k}_{\langle F_{k,-k}\rangle}$. Note that, if $m \in M$ has weight $\lambda$, then the weight of $F_{k,-k}^xm$ is $\lambda + 2x\varepsilon_k$. Hence, for a weight module $M$,  $\Supp D_{\Gamma}^{\mat{\mu}} M \subset 2 \mat{\mu} + \Supp M + \mathcal Q_{C}$.

 \begin{example}
Let  $\overline{\mat{\nu}} = \mat{\nu} + \mathcal Q_C   \in \mathbb C^n/\mathcal Q_C$ be such that $ {\rm Int}(\mat{\nu}) \neq \llbracket n \rrbracket $. Then 
$$
\mathcal F(\overline{2\mat{\nu}}) \simeq D_{\llbracket n \rrbracket \setminus {\rm Int}(\nu)}^{\nu}  \mathcal F(\overline{0})
$$
as modules of $\mathfrak{sp}(2n)$ and of $\mathcal D(n)^{\rm ev}$.  In particular, $ D_1\mathcal F(\overline{\epsilon_1}) \simeq D_{1}^{1/2}  \mathcal F(\overline{0})$, or, equivalently, $ D_1\mathcal F^{\rm odd} \simeq D_{1}^{1/2}  \mathcal F^{\rm ev}$.

\end{example}
\begin{proposition} \label{sp-loc-inj} Let $\mat{\lambda}\in(\frac{1}{2}+\mathbb{Z})^{n}$ be a dominant $\mathfrak{so}(2n)$-weight.
\begin{itemize}
\item[(i)] Let $\Gamma$ be a subset of $\llbracket n \rrbracket $, and $\mat{\mu}=\sum_{k \in \Gamma} \mu_{k}  \varepsilon_k$ be such that  $2\mu_k \notin \mathbb Z$ for all $k$, in particular,  ${\rm Int} (2\mu) = \llbracket n \rrbracket \setminus \Gamma$. Then
$$ 
D_{\Gamma}^{\smat{\mu}} L_C(\mat{\lambda} + \mat{1}) \simeq   S(\chi_{\smat{\lambda} + \smat{1}\mat{}}, 2\mat{\mu}+ \mat{\lambda} + \mat{1}, \emptyset).
$$

\item[(ii)] Let $\overline{\mat{\nu}} = \mat{\nu} + \mathcal Q_C   \in \mathbb C^n/\mathcal Q_C$. The injective envelope of $S = S(\chi_{\smat{\lambda} + \smat{1}\mat{}}, \overline{\mat{\nu}}, \emptyset)$ in the category of bounded $\mathfrak{sp}(2n)$-modules is $D_{{\rm Hf} ( \mat{\overline{\nu}})} S  \simeq D_{\llbracket n \rrbracket }^{\nu} L_C(\lambda+1)$.
\end{itemize}
\end{proposition}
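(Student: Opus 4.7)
The plan is to apply Proposition~\ref{prop-class-sp}(i) (uniqueness of simple bounded modules from their invariants) to identify the twisted localization in part (i), and then reduce part (ii) to part (i) by composing (twisted) localizations, citing \cite{GS06} for the injective-envelope property.

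For part (i), note first that since $\lambda_1\in\tfrac{1}{2}+\mathbb{Z}$, the weight $\mat{\lambda}+\mat{1}$ is never $\mathfrak{sp}(2n)$-dominant, so $L_C(\mat{\lambda}+\mat{1})$ is infinite-dimensional; an analysis of its support shows that it is a simple bounded highest-weight module isomorphic to $S(\chi_{\smat{\lambda}+\smat{1}\mat{}},\overline{\mat{\lambda}+\mat{1}},\emptyset)$ with full cone $\mathcal{C}_{\llbracket n \rrbracket,\emptyset}$. In particular each $F_{k,-k}$ acts injectively, so $D_\Gamma^{\smat{\mu}} L_C(\mat{\lambda}+\mat{1})$ is non-zero and, being a twisted localization of a simple module, simple. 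To invoke Proposition~\ref{prop-class-sp}(i) I would verify the three invariants: the central character is preserved because $\ad(F_{k,-k})(z)=0$ for central $z$ and hence $\phi_x(z)=z$; the support is contained in $\overline{2\mat{\mu}+\mat{\lambda}+\mat{1}}$ by the shift formula recalled in Section~\ref{subsec-bounded}; and since $\mat{\lambda}+\mat{1}+\mat{1/2}\in\mathbb{Z}^n$, the hypothesis $2\mu_k\notin\mathbb{Z}$ for $k\in\Gamma$ gives ${\rm Hf}(\overline{2\mat{\mu}+\mat{\lambda}+\mat{1}})={\rm Int}(2\mat{\mu})=\llbracket n \rrbracket\setminus\Gamma$, so the predicted cone is $\mathcal{C}_{\llbracket n \rrbracket\setminus\Gamma,\emptyset}$. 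Boundedness of $D_\Gamma^{\smat{\mu}} L_C(\mat{\lambda}+\mat{1})$ with this exact cone is the main technical point; I would handle it by applying the central-character translation functor $\theta_{\sigma}^{\chi_{\smat{\lambda}+\smat{1}\mat{}}}$ (which commutes with $D_\Gamma^{\smat{\mu}}$ and preserves cones) to reduce to the case $\mat{\lambda}=-\mat{1/2}$, where $L_C(\mat{1/2})\simeq\mathcal{F}^{\rm ev}$ by Example~\ref{sp-2n-sigma} and a direct Weyl-algebra computation identifies $D_\Gamma^{\smat{\mu}} \mathcal{F}^{\rm ev}$ with a module in the family $\{\mathcal{F}(\overline{\mat{\xi}},\Sigma')\}$, which is manifestly bounded with the required cone. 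Uniqueness in Proposition~\ref{prop-class-sp}(i) then yields the claim.

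For part (ii), set $\Gamma=\llbracket n \rrbracket\setminus{\rm Hf}(\overline{\mat{\nu}})$ and decompose $\mat{\nu}=\mat{\mu}'+\mat{\mu}''$ with $\mat{\mu}'$ supported on $\Gamma$ (and $2\mu'_k\notin\mathbb{Z}$ there) and $\mat{\mu}''$ supported on ${\rm Hf}(\overline{\mat{\nu}})$. Part (i) identifies $S\simeq D_\Gamma^{\smat{\mu}'} L_C(\mat{\lambda}+\mat{1})$, and since twisted localizations over disjoint index sets commute and their twists combine additively,
$$
D_{{\rm Hf}(\overline{\mat{\nu}})} S \simeq D_{{\rm Hf}(\overline{\mat{\nu}})} D_\Gamma^{\smat{\mu}'} L_C(\mat{\lambda}+\mat{1}) \simeq D_{\llbracket n \rrbracket}^{\smat{\nu}} L_C(\mat{\lambda}+\mat{1}),
$$
where the integral twist by $\mat{\mu}''$ is absorbed via conjugation by powers of $F_{k,-k}$ for $k\in{\rm Hf}(\overline{\mat{\nu}})$. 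The claim that this module is the injective envelope of $S$ in the bounded category then follows from the description of indecomposable injectives in \cite{GS06}, which realizes the injective envelope of $S(\chi,\overline{\mat{\nu}},\emptyset)$ precisely as the localization at the cone-direction root vectors.

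The principal obstacle is the boundedness and cone verification of the twisted localization in part (i); everything else follows routinely from the classification in Proposition~\ref{prop-class-sp}(i) and the injective-envelope description in \cite{GS06}.
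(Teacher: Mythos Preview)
Your proposal is correct and follows essentially the same route as the paper's own proof: match the invariants (central character, support coset, cone) of the twisted localization with those of $S(\chi_{\smat{\lambda}+\smat{1}\mat{}},2\mat{\mu}+\mat{\lambda}+\mat{1},\emptyset)$ to invoke the uniqueness in Proposition~\ref{prop-class-sp}(i), and cite \cite{GS06} for the injective-envelope statement in part (ii). If anything, you are more careful than the paper, which simply asserts the invariants agree; your translation-functor reduction to the oscillator case is exactly what is needed to justify simplicity and boundedness of $D_\Gamma^{\smat{\mu}} L_C(\mat{\lambda}+\mat{1})$, and your decomposition $\mat{\nu}=\mat{\mu}'+\mat{\mu}''$ makes the isomorphism $D_{{\rm Hf}(\overline{\mat{\nu}})} S \simeq D_{\llbracket n \rrbracket}^{\smat{\nu}} L_C(\mat{\lambda}+\mat{1})$ explicit where the paper leaves it implicit.
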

\begin{proof}
The  statements follow from \cite{Mat00} and \cite{GS06}. More precisely, for part (i) we use the fact that $D_{\Gamma}^{\mu} L_C(\lambda+1)$ and $S(\chi_{\smat{\lambda} + \smat{1}\mat{}}, 2\mat{\mu}+ \mat{\lambda} + \mat{1}, \emptyset)$ have the same support, central character and cone, while part (ii) follows from Proposition 4.7 in \cite{GS06}.
\end{proof}

\section{Tableaux realizations of bounded and generic $\mathfrak{sp}(2n)$-modules}
In this section we provide explicit tableaux realization of certain classes of infinite-dimensional $\mathfrak{sp}(2n)$-modules with explicit  action of the generators of $\mathfrak{sp}(2n)$ given by the formulas (\ref{Formulas sp}, \ref{Formulas sp1}, \ref{Formulas sp2}, \ref{Formulas sp3}). These classes include generic, simple bounded modules, and injective (hence projective) bounded modules.

\begin{definition}\label{good tableau}
A  tableau $T(L)$ of type $C$ is called \emph{$C$-regular} if its entries satisfy the following conditions: 
\begin{itemize}
\item[(i)] $\ell'_{ka}-\ell'_{ki}\neq 0$, $1\leq i\neq a\leq k\leq n$.
\item[(ii)] $\ell_{ka}-\ell_{ki}\neq 0$, $1\leq i\neq a\leq k\leq n-1$.
\item[(iii)] $\ell_{ka}+\ell_{ki}-1\neq 0$, $1\leq i\neq a\leq k\leq n-1$.
\item[(iv)] $2\ell_{ki}-1\neq 0$, $1\leq i\leq k\leq n-1$.
\end{itemize}
Equivalently, all rational functions in (\ref{operator A}, \ref{operator B}, \ref{operator C}, \ref{operator D}) are regular when evaluated at $T(L)$.

Similarly, a tableau $T(L)$ of type $D$ is called \emph{$D$-regular} if its entries satisfy conditions {\rm (i)--(iii)} above, cf. Remark \ref{rem-thm-d}. In particular, every $C$-regular tableau is $D$-regular. 
\end{definition}
In what follows we will show that the infinite-dimensional generic and bounded $\mathfrak{sp}(2n)$-modules have bases consisting of  regular tableaux. 
\subsection{Generic tableaux modules $V(T(L))$}

Consider  $\bar{X}:=\{x_{ij},x'_{ij}\ |\ 1\leq j\leq i\leq n\}$,  the field $\mathbb{C}(\bar{X})$ of rational functions on $\bar{X}$, and a tableau $T(X)$ of type $C$ with corresponding  entries $x_{ij},x'_{ij}$. By  $V(X)$ we denote the $\mathbb{C}(\bar{X})$-vector space with basis $$\mathcal{B}(X):=\{T(X+Z)\ |\ z_{ij},z'_{ij}\in \mathbb{Z}, \text{ and } z_{n1}=\ldots=z_{n,n}=0\},$$
where $z_{ij}, z'_{ij}$ are the corresponding entries of $T(Z)$.

For a tableau $T(W)$ of type $C$ denote by $W_{\rm top}$ the top row of $W$. For convenience, we will often consider  $W_{\rm top} \in \mathbb C^n$ as an $\mathfrak{sp}(2n)$-weight. 
\begin{theorem}\label{C-module variables}
The vector space $V(X)$ has a structure of $\mathfrak{sp}(2n)$-module with action of the generators of $\mathfrak{sp}(2n)$ given by the formulas (\ref{Formulas sp}, \ref{Formulas sp1}, \ref{Formulas sp2}, \ref{Formulas sp3}).  Moreover, the action of any $z\in Z(U(\mathfrak{sp}(2n)))$ on $V(X)$ is given by multiplication by 
$$
p_z(X_{\rm top} - \rho_C - \mat{\frac{1}{2}}) = p_{z}(x_{n1}+1/2,x_{n2}+3/2,\ldots,x_{nn}+(2n-1)/2).$$
\end{theorem}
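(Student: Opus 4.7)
The plan is to reduce the verification to the classical finite-dimensional case via the Zariski density principle of Lemma \ref{density lemma}. The formulas (\ref{Formulas sp})--(\ref{Formulas sp3}) turn $V(X)$ into at least a $\mathbb{C}(\bar X)$-vector space with prescribed operators that commute with the grading by integer shifts, so each operator sends a basis tableau $T(X+Z)$ to a finite combination of basis tableaux $T(X+Z')$ with rational-function coefficients. Thus what needs to be verified is that these operators satisfy the defining Lie relations of $\mathfrak{sp}(2n)$ in terms of the generating set $\{F_{-k,k},F_{k,-k},F_{kk},F_{k-1,-k}\}$.

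First I would fix such a defining relation $R$ and a basis tableau $T(X+Z)$, expand $R\cdot T(X+Z)$ using (\ref{Formulas sp})--(\ref{Formulas sp3}), and collect the coefficient of each tableau in the result. After clearing the nonvanishing rational denominators (products of differences of entries and of terms $2x_{ki}-1$, all nonzero in $\mathbb{C}(\bar X)$), each coefficient becomes a polynomial expression $f_R(X+Z)$ in the entries of $X+Z$. Because $Z$ is a fixed integer shift, vanishing of $f_R(X+Z)$ for all $X$ is equivalent to vanishing of the polynomial $f_R$ on the entire affine space $\mathbb{C}^{n^2+n}$. Now by Theorem \ref{fd sp}, the same formulas give a genuine $\mathfrak{sp}(2n)$-module structure on $L_C(\mat{\lambda})$ for every dominant $\mathfrak{sp}(2n)$-weight $\mat{\lambda}$, so $f_R$ vanishes on every $C$-standard tableau (the shift $T(X+Z)$ being $C$-standard whenever $X+Z$ is). Lemma \ref{density lemma} then forces $f_R \equiv 0$, proving that $R$ is satisfied on $V(X)$.

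For the central-character statement, fix $z \in Z(U(\mathfrak{sp}(2n)))$. Once the module structure is established, we may expand $z\cdot T(X)$ using the formulas and collect it as $\sum_{W} q_{z,W}(\bar X)\, T(W)$ for certain $q_{z,W}\in \mathbb{C}(\bar X)$, the sum being finite and restricted to tableaux $T(W)$ with $W_{\rm top}=X_{\rm top}$. By Remark \ref{center is polynomial in lambda} and Theorem \ref{fd sp}, for every $C$-standard top row corresponding to a dominant $\mat{\lambda}$, the element $z$ acts on $L_C(\mat{\lambda})$ as multiplication by $p_z(\mat{\lambda}) = p_z(X_{\rm top} - \rho_C - \mat{1/2})$. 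Subtracting this scalar from $q_{z,X}(\bar X)$ and setting the other $q_{z,W}(\bar X)$ equal to zero yields rational identities that, after clearing denominators, become polynomial identities vanishing on all $C$-standard tableaux; applying Lemma \ref{density lemma} once more concludes the proof.

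The main obstacle is not conceptual but bookkeeping: the operators $F_{k-1,-k}$ are the most complicated ones, involving the four-fold sum in (\ref{Formulas sp3}), and the relations among them (notably the Serre relations and the $[F_{k-1,-k},F_{-k,k}]$ identity used implicitly in (\ref{Formulas sp5})) expand into lengthy rational expressions. However, the density argument is precisely what allows us to avoid actually simplifying these expressions: we only need to know they are \emph{polynomial} identities once denominators are cleared, and that they hold on a Zariski-dense subset.
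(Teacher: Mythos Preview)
Your overall strategy---reduce to the finite-dimensional case via Zariski density---is the same as the paper's, but there is a genuine gap in the step where you conclude that each coefficient polynomial $f_R$ vanishes on every $C$-standard tableau.

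The problem is that in $L_C(\mat{\lambda})$ the basis consists \emph{only} of $C$-standard tableaux; the convention implicit in Theorem~\ref{fd sp} is that a non-standard tableau is zero. When you apply a monomial in the generators to a standard $T(W)$, some of the intermediate and terminal tableaux produced by the formulas may be non-standard (e.g.\ $A_{ki}(L)$ carries no factor that vanishes when $\ell'_{ki}+1$ violates the upper bound $\ell_{k,i-1}>\ell'_{ki}$). In $L_C(\mat{\lambda})$ those terms are dropped, while in $V(X)$ they are kept. Consequently the expansion of $u\cdot T(Y)$ computed generically and then specialized at $Y=W$ is \emph{not} in general the same linear combination as $u\cdot T(W)$ computed inside $L_C(\mat{\lambda})$. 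The identity $u\cdot T(W)=0$ in the finite-dimensional module therefore does not force $f_p(W)=0$ for all $p$: it says nothing about the $Z_p$ with $T(W+Z_p)$ non-standard, and even the coefficients of standard terminal tableaux can differ if an intermediate step passed through a non-standard tableau.

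The paper supplies the missing ingredient: for a fixed degree bound $M$ on the monomials in $u$, it builds an explicit integer shift $T(S)$ (depending on a large even $N>\max\{2|\lambda_1|,M^{2n}\}$, with $s'_{ki}=-(2n-2k+i-1)N$ and $s_{ki}=-(2n-2k+i)N$) such that for every standard $T(W)$ the shifted tableau $T(\bar W)=T(W+S)$ is again $C$-standard and all the relevant entry-gaps exceed $M^{2n}$. This guarantees that every intermediate and terminal tableau arising while expanding $u\cdot T(\bar W)$ remains $C$-standard, so the generic and finite-dimensional computations literally coincide at $\bar W$. One then gets $f_p(\bar W)=0$ for \emph{all} $p$ and all standard $W$, and applies Lemma~\ref{density lemma} to the translated polynomial $\bar f_p(\cdot)=f_p(\cdot+S)$. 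The same separation device is needed for the central-character part as well; your argument there has the identical gap.
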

\begin{proof}

The proof uses ideas of \cite{MZ00}, Section 6.4, and \cite{Zad17}, Section 3.5. To prove that $V(X)$ has the desired $\mathfrak{sp}(2n)$-module structure, we consider any element $u$ of the tensor algebra of $\mathfrak{sp}(2n)$, which is zero in $U(\mathfrak{sp}(2n))$, and show that $u\cdot T(Y)=0$ for any $T(Y)\in \mathcal{B}(X)$. After applying the formulas (\ref{Formulas sp}, \ref{Formulas sp1}, \ref{Formulas sp2}, \ref{Formulas sp3}) we present $u\cdot T(Y)$ as a linear combination of elements in the basis $\mathcal{B}(X)$ with the coefficients being rational functions in the variables $\{x_{ij},x'_{ij}\ |\ 1\leq j\leq i\leq n\}$, say $u\cdot T(Y)=\sum\limits_{p\in P}f_{p}(Y)T(Y+Z_p)$ for some integer tableaux $Z_{p}$. We will show that $f_{p}(Y)=0$ for any $p\in P$.

 Let us consider a fixed presentation of $u\in U(\mathfrak{sp}(2n))$. If $M$ is the maximum degree of a monomial in this presentation, we fix a number $N$ larger than $\max\{2|\lambda_1|, M^{2n}\}$. Let us consider any dominant $\mathfrak{sp}(2n)$ weight $\mat{\lambda}$, and $T(W)\in C_{st}^{\mat{\lambda}}$. Denote by $\mat{\bar{\lambda}}$ the dominant $\mathfrak{sp}(2n)$ weight $\mat{\lambda}-N(1,2,\ldots,n)$.  

Associated with $u$ we consider the tableau $T(S)$ of type $C$ with entries $s'_{ki}=-(2n-2k+i-1)N$, and $s_{ki}=-(2n-2k+i)N$. A straightforward verification shows that $T(\bar{W}):=T(W+S)\in C_{st}^{\bar{\mat{\lambda}}}$, and

\begin{itemize}
\item[(i)] $|\bar{w}'_{ka}-\bar{w}'_{ki}|> M^{2n}$ if $1\leq a\neq i\leq k\leq n$,
\item[(ii)] $|\bar{w}_{ka}-\bar{w}_{ki}|>M^{2n}$ if $1\leq a\neq i\leq k\leq n-1$,
\item[(iii)] $|\bar{w}_{ka}+\bar{w}_{ki}|>M^{2n}$ if $1\leq i\neq a\leq k\leq n-1$,
\item[(iv)] $|\bar{w}_{ki}-\frac{1}{2}|>M^{2n}$ for any $1\leq i\leq k\leq n-1$.
\end{itemize}

Therefore, when we write $u\cdot T(\bar{W})$ as linear combination of basis elements, the coefficients are precisely these rational functions that appear in $u\cdot T(Y)$ evaluated at the entries of $T(\bar{W})$, i.e.,  $u\cdot T(\bar{W})=\sum\limits_{p\in P}f_{p}(\bar{W})T(\bar{W}+Z_p)$. Since $T(\bar{W})\in C_{st}^{\bar{\mat{\lambda}}}$, we necessarily have $u\cdot T(\bar{W})=0$, and hence $f_{p}(\bar{W})=0$ for any $T(\bar{W})\in C_{st}^{\mat{\lambda}}$, and any dominant $\mathfrak{sp}(2n)$-weight $\mat{\lambda}$. If $\bar{f}_p (W) = {f}_p (W+S)$, then we have that 
$\bar{f}_{p}(W)=0$ for any $T(W)\in C_{st}^{\mat{\lambda}}$.
 Now, by Lemma \ref{density lemma} we have $\bar{f}_{p}=0$ for any $p\in P$, and hence ${f}_{p}=0$. Consequently,  $u\cdot T(Y)=0$.

For the second statement of the theorem we use similar reasoning as above.  Namely, set $z\in Z(U(\mathfrak{sp}(2n)))$, $T(Y)\in\mathcal{B}(X)$, and let   $z\cdot T(Y)=\sum_j g_j(Y)T(Y+Z_{j})$. If $Z_{j}\neq 0$, then $g_j(R)=0$ for any  tableau  $T(R)\in C_{st}^{\mat{\bar{\beta}}}$, and any dominant $\mathfrak{sp}(2n)$-weight $\mat{\beta}$. So, by Lemma \ref{density lemma} we conclude $z\cdot T(Y)= g(Y)T(Y)$ for some rational function $g \in \mathbb{C}(\bar{X})$. 

To compute explicitly $g(Y)$ we first consider $W$ such that $T(W) \in C_{st}^{\mat{\beta}}$ for a dominant $\mathfrak{sp}(2n)$-weight $\mat{\beta}$. Then $g(W) = p_z(\mat{\beta}) = p_z(W_{\rm top} - \rho_C - \mat{\frac{1}{2}})$. Consider $h(X):=g(X)-p_{z}(X_{\rm top}-\rho_C - \mat{\frac{1}{2}})$. Since $h(W) = 0$ for all $T(W) \in C_{st}^{\mat{\bar{\beta}}}$ for all dominant $\mathfrak{sp}(2n)$-weights $\mat{\beta}$, by Lemma \ref{density lemma} we have $h= 0$. But then we have that $z \cdot T(X) = p_{z}(X_{\rm top}-\rho_C - \mat{\frac{1}{2}}) T(X)$ as needed.

\end{proof}

\begin{definition}\label{def-generic}
A tableau $T(L)$ of type $C$ is called \emph{$C$-generic} if 

\begin{itemize}
\item[(i)] $\ell'_{ka}-\ell'_{ki}\notin\mathbb{Z}$, $1\leq i\neq a\leq k\leq n$.
\item[(ii)] $\ell_{ka}-\ell_{ki}\notin\mathbb{Z}$, $1\leq i\neq a\leq k\leq n-1$.
\item[(iii)] $\ell_{ka}+\ell_{ki}\notin\mathbb{Z}$, $1\leq i, a\leq k\leq n-1$.
\item[(iv)] $\ell_{ki}\notin\frac{1}{2}+\mathbb{Z}$ for any $1\leq i\leq k\leq n-1$.
\end{itemize}
A tableau $T(L)$ of type $D$ is called \emph{$D$-generic} if it satisfies conditions {\rm (i)--(iii)} above.
\end{definition}
Associated with  a generic tableau $T(L)$ of type $C$ (respectively, of type $D$), we have a discrete set of $C$-regular (respectively, $D$-regular) tableaux, $\mathcal{B}(T(L)):=\{T(L+Z)\ |\ z_{i,j},\ z'_{i,j}\in\mathbb{Z},\  \text{ and }z_{n,j}=0,  \text{ for any  }j\in \llbracket n \rrbracket \}.$ Set $V(T(L)):=\Span_{\mathbb{C}}\mathcal{B}(T(L))$.

\begin{corollary} \label{cor-sp-gen}
If $T(L)$ is a $C$-generic tableau, then $V(T(L))$ has a structure of an $\mathfrak{sp}(2n)$-module with action of the generators of $\mathfrak{sp}(2n)$ given by the formulas (\ref{Formulas sp}, \ref{Formulas sp1}, \ref{Formulas sp2}, \ref{Formulas sp3}). Moreover, the action of any $z\in Z(U(\mathfrak{sp}(2n)))$ is given by multiplication by $p_z(L_{\rm top} - \rho_C - \mat{\frac{1}{2}})$.
\end{corollary}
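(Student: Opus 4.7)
The strategy is to obtain the result as a specialization of Theorem \ref{C-module variables}: the $\mathfrak{sp}(2n)$-module structure on the generic variable space $V(X)$ restricts to any particular $C$-generic tableau $T(L)$, because the $C$-generic conditions are precisely what is needed to keep all rational coefficients in formulas (\ref{operator A})--(\ref{operator D}) regular after evaluation.

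First, I would verify that the coefficients $A_{ki}$, $B_{ki}$, $C_{ki}$, $D_{kijm}$ in the action formulas (\ref{Formulas sp})--(\ref{Formulas sp3}), when evaluated at the entries of any tableau $T(L+Z) \in \mathcal{B}(T(L))$, have non-vanishing denominators. Concretely, the denominators involve the expressions $\ell'_{ka}-\ell'_{ki}+(z'_{ka}-z'_{ki})$, $\ell_{ka}-\ell_{ki}+(z_{ka}-z_{ki})$, $\ell_{ka}+\ell_{ki}-1+(z_{ka}+z_{ki})$ and $2\ell_{ki}-1+2z_{ki}$. Each of the $C$-generic conditions (i)--(iv) in Definition \ref{def-generic} states that the corresponding base expression lies in a coset of $\mathbb{C}/\mathbb{Z}$ not containing $0$; hence no integer shift by entries of $Z$ can produce a zero denominator. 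Thus every specialization is a well-defined complex number, and the sums in formulas (\ref{Formulas sp1})--(\ref{Formulas sp3}) make sense on $V(T(L))$.

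Next, I would set up a specialization map. Let $R_L \subset \mathbb{C}(\bar{X})$ be the subring of rational functions whose denominators do not vanish at any shift of the entries of $T(L)$ by an integer tableau $Z$. By the previous step, all structure coefficients from Theorem \ref{C-module variables} lie in $R_L$. The evaluation homomorphism $\mathrm{ev}_L : R_L \to \mathbb{C}$ is well-defined, and combined with the bijection $T(X+Z)\leftrightarrow T(L+Z)$ it yields a $\mathbb{C}$-linear surjection $\pi_L : V(X)_{R_L} \to V(T(L))$ from the $R_L$-submodule of $V(X)$ spanned by the basis $\mathcal{B}(X)$. Since the action of every generator of $\mathfrak{sp}(2n)$ on $V(X)$ has coefficients in $R_L$, and since all relations satisfied in $U(\mathfrak{sp}(2n))$ hold on $V(X)$ by Theorem \ref{C-module variables} (as polynomial identities in the entries of $X$), those same identities continue to hold after applying $\mathrm{ev}_L$. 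This endows $V(T(L))$ with the claimed $\mathfrak{sp}(2n)$-module structure.

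For the central character claim, Theorem \ref{C-module variables} asserts that any $z \in Z(U(\mathfrak{sp}(2n)))$ acts on $V(X)$ via multiplication by the polynomial $p_z(X_{\rm top}-\rho_C - \mat{1/2})$; specializing via $\mathrm{ev}_L$ immediately gives scalar action by $p_z(L_{\rm top}-\rho_C - \mat{1/2})$ on $V(T(L))$.

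There is really no serious obstacle here: the only non-trivial point is the bookkeeping in the first step, ensuring that the $C$-generic conditions exactly match the denominators appearing in (\ref{operator A})--(\ref{operator D}) after arbitrary integer shifts, so that the specialization is genuinely defined on the full orbit $\mathcal{B}(T(L))$ rather than merely at $T(L)$ itself. Once that is verified, the rest is a transfer of identities from $V(X)$ to $V(T(L))$ by evaluation.
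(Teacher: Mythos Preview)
Your proposal is correct and takes essentially the same approach as the paper: both obtain the corollary by specializing Theorem~\ref{C-module variables} via the evaluation $x_{ij}\mapsto\ell_{ij}$, $x'_{ij}\mapsto\ell'_{ij}$, observing that the $C$-generic conditions guarantee every tableau in $\mathcal{B}(T(L))$ is $C$-regular so that all coefficient functions remain defined. The paper's proof is a two-sentence sketch of exactly this idea; your version simply spells out the specialization machinery (the ring $R_L$, the evaluation map $\mathrm{ev}_L$) more carefully.
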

\begin{proof}

The statement follows  from Theorem \ref{C-module variables} after the evaluation $x_{ij}= \ell_{ij}$, and $x'_{ij}= \ell'_{ij}$. Indeed, the $C$-regularity of the set $\mathcal{B}(T(L))$ ensures that  any rational function that appears in the decomposition of $u\cdot T(Y)$ is defined for the evaluated values, and for any $u\in U(\mathfrak{sp}(2n))$.\end{proof}

Henceforth, we will call an $\mathfrak{sp}(2n)$-module \emph{generic} if it is isomorphic to 
a subquotient of $V(T(L))$ for some $C$-generic tableau $T(L)$.

In view of Remark \ref{rem-thm-d} we have the following corollary, the proof of which follows the same reasoning as the proof of Theorem \ref{C-module variables}, and consequently of Corollary \ref{cor-sp-gen}.

\begin{corollary} \label{cor-so-gen}
If $T(L)$ is a $D$-generic tableau, then $V(T(L))$ has a structure of an $\mathfrak{so}(2n)$-module with action of the generators of $\mathfrak{so}(2n)$ given by the GT-formulas of type $D$. Moreover, the action of any $z\in Z(U(\mathfrak{so}(2n)))$ is given by multiplication by $p_z(L_{\rm top} - \rho_D - \mat{\frac{1}{2}})$.
\end{corollary}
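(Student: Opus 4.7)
The plan is to carry over the strategy of Theorem~\ref{C-module variables} and Corollary~\ref{cor-sp-gen} to the type $D$ setting, evaluating the generic $\mathfrak{so}(2n)$-analog at the entries of $T(L)$. The first step will be to introduce the $D$-type ``universal'' object: let $\bar X^{D} = \{x_{ij}\mid 1\leq j\leq i\leq n\}\cup\{x'_{ij}\mid 2\leq j\leq i\leq n\}$ (so $n^2$ variables, matching the number of entries in a $D$-tableau), let $T(X)$ be the $D$-tableau with these entries, and let $V^{D}(X)$ be the $\mathbb C(\bar X^{D})$-vector space with basis $\mathcal B^{D}(X) = \{T(X+Z)\mid z_{ij},z'_{ij}\in\mathbb Z,\ z_{n1}=\cdots=z_{nn}=0\}$. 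The GT-formulas of type $D$ (see Remark~\ref{rem-thm-d}) formally define $\mathbb C(\bar X^{D})$-linear operators on $V^{D}(X)$ indexed by the Chevalley generators of $\mathfrak{so}(2n)$.

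Next I would prove the $\mathfrak{so}(2n)$-analog of Theorem~\ref{C-module variables}: for any element $u$ of the tensor algebra that vanishes in $U(\mathfrak{so}(2n))$, and any $T(Y)\in \mathcal B^{D}(X)$, the expansion $u\cdot T(Y) = \sum_p f_p(Y)\, T(Y+Z_p)$ satisfies $f_p = 0$. The argument is the one already used: pick a dominant $\mathfrak{so}(2n)$-weight $\mat\lambda$ and $T(W)\in D_{st}^{\smat\lambda}$; translate by a tableau $T(S)$ built from a sufficiently large $N$ (larger than $M^{2n}$ and $2|\lambda_1|+2|\lambda_2|$, where $M$ is the maximum monomial degree in a fixed presentation of $u$) so that $T(\bar W) := T(W+S)$ is itself $D$-standard for the shifted dominant weight $\mat{\bar\lambda}$ and all the finitely many shifted tableaux $T(\bar W + Z_p)$ remain $D$-standard. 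On such a genuine $L_D(\mat{\bar\lambda})$-basis element, $u\cdot T(\bar W)=0$, so each $f_p(\bar W)=0$. This forces the evaluation of each $f_p$ to vanish on $\bigcup_{\smat\lambda} D_{st}^{\smat\lambda}$. To conclude $f_p = 0$ as rational functions I would invoke the $D$-type version of Lemma~\ref{density lemma}: the set $\bigcup_{\smat\lambda} D_{st}^{\smat\lambda}$ is Zariski dense in $\mathbb C^{n^2}$. This density is proved by exactly the argument sketched for type $C$ in \cite{Zad17}: for fixed top row $\mat\lambda + \rho_D + \mat{1/2}$ one can choose the interior entries from an open subset of a product of real intervals whose dimension equals $n^2 - n$, and then vary $\mat\lambda$ among dominant integer weights, giving a dense subset of a full-dimensional real subspace of $\mathbb C^{n^2}$.

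The central-character statement is then proved by the same bootstrap: for $z\in Z(U(\mathfrak{so}(2n)))$ one first shows $z\cdot T(Y)$ is diagonal (no off-diagonal coefficient survives the vanishing-on-$D_{st}^{\smat\lambda}$ argument plus Lemma~\ref{density lemma} in type $D$), and then identifies the scalar $g(X)$ with $p_z(X_{\rm top}-\rho_D - \mat{\frac{1}{2}})$ by observing that the difference vanishes on all $D$-standard tableaux, hence identically. Finally, evaluation at $x_{ij}=\ell_{ij},\ x'_{ij}=\ell'_{ij}$ with $T(L)$ a $D$-generic tableau gives Corollary~\ref{cor-so-gen}: the $D$-generic conditions in Definition~\ref{def-generic} are precisely what is needed so that every rational coefficient appearing in the GT-formulas of type $D$ is regular on the entire discrete set $\mathcal B(T(L))$, exactly as for Corollary~\ref{cor-sp-gen}.

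The main obstacle is the Zariski density statement for type $D$, since Lemma~\ref{density lemma} was stated only for type $C$: one has to check that the different shape of the $D$-dominance condition ($-\lambda_1-\lambda_2\in\mathbb Z_{\geq 0}$ rather than $-\lambda_1\in\mathbb Z_{\geq 0}$) and the different indexing of the $\ell'_{ij}$ still allow enough room to realize a Zariski-dense family of $D$-standard tableaux. Once that density is in hand, the rest of the argument is a direct transcription of the type $C$ proofs already given in the paper.
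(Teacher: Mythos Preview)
Your proposal is correct and follows exactly the route the paper indicates: the paper's own proof consists of the single remark that the argument of Theorem~\ref{C-module variables} and Corollary~\ref{cor-sp-gen} carries over verbatim to type $D$ via Remark~\ref{rem-thm-d}, and you have spelled out precisely that transcription, including the need for a $D$-type analog of Lemma~\ref{density lemma}. Your identification of the density of $\bigcup_{\smat\lambda} D_{st}^{\smat\lambda}$ in $\mathbb C^{n^2}$ as the only point requiring separate verification is accurate, and your sketch of why it holds is adequate.
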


\begin{remark}
The connection between the $D$-type and the $C$-type generic modules $V(T(L))$ can be described as follows.
Take a tableau $T(L)$ which is both $C$-regular and $D$-generic, and let $\mat{\mu} \in \mathbb C^n$ be such that the tableau $T(\mat{\mu}, L)$ obtained by adding $\mat{\mu}$ as a column on the left of $T(L)$ is a $C$-generic tableau. Equivalently, $\mat{\mu}$ is such that
$\mu_k - \ell'_{ki} \notin \mathbb Z$ for all $k\geq i$. This leads to a correspondence $M\mapsto V(\mat{\mu}, M)$ from the set of modules that are both $C$-regular and $D$-generic  to the set of $C$-generic modules. In the next subsection we will observe similar correspondence $L_D(\mat{\lambda}) \mapsto V(\mat{\mu}, \mat{\lambda})$ from the $C$-regular $D$-standard modules (also known as spinor-type $\mathfrak{so}(2n)$-modules) to the infinite-dimensional bounded $\mathfrak{sp}(2n)$-modules (also known as oscillator-type $\mathfrak{sp}(2n)$-modules).
\end{remark}

\subsection{Bounded tableaux modules $V(\mat{\mu}, \mat{\lambda})$}

 Recall from Definition \ref{set of standard tableaux} that $D_{st}^{\mat{\lambda}}$ denotes the set of all $D$-standard tableaux with top row $\mat{\lambda}+ \rho_D + \mat{1/2}$. 
 \begin{definition}\label{def-T_D}
For any tableau $T(L)$ of type $C$, we denote by $T_{D}(L)$ the tableau of type $D$ obtained from $T(L)$ by removing the entries $\{\ell'_{11},\ell'_{21},\ldots,\ell'_{n1}\}$. Also, we denote by $T_{C\setminus D}(L)$ the $n$-tuple $(\ell'_{11},\ell'_{21},\ldots,\ell'_{n1})$.
\end{definition}

\begin{definition}\label{V space}
Given any   $\mat{\mu}=(\mu_{1},\ldots,\mu_{n})\in\mathbb{C}^{n}$ and $\mat{\lambda}=(\lambda_{1},\ldots,\lambda_{n})\in(\frac{1}{2}+\mathbb{Z}^{n})\cup \mathbb{Z}^{n}$ we associate the following:

\begin{itemize}
\item[(i)] $\mat{\ell}:=\mat{\lambda}+\rho_{D}+\mat{1/2}$, equivalently, $\ell_i := \lambda_i - i + 3/2$; 

\item[(ii)] the set of tableaux of type $C$
$$\mathcal{B}(\mat{\mu},\mat{\lambda}):=\{ T(W)\ |\ T_{C\setminus D}(W)\in\mat{\mu}+\mathbb{Z}^{n}\text{, and } T_{D}(W)\in D_{st}^{\mat{\lambda}}\};$$

\item[(iii)] the $\mathbb{C}$-vector space $V(\mat{\mu},\mat{\lambda})$ with basis $\mathcal{B}(\mat{\mu},\mat{\lambda})$.
\end{itemize}
\end{definition}

\begin{remark}\label{nonempty bases}
Since all tableaux in $\mathcal{B}(\mat{\mu},\mat{\lambda})$ have the same top row $\mat{\ell}$, it follows from the definition that 
$\mathcal{B}(\mat{\mu},\mat{\lambda})\neq\emptyset$ if and only if $\mat{\lambda}$ is a dominant $\mathfrak{so}(2n)$-weight.
\end{remark}

\begin{proposition}\label{Prop: good basis of tableaux}
Suppose that $\mat{\lambda}$ is a dominant $\mathfrak{so}(2n)$-weight. Then $\mathcal{B}(\mat{\mu},\mat{\lambda})$ is a $C$-regular set of tableaux if and only if $\mu_i \notin \mathbb Z$ and $\lambda_i \in \frac{1}{2} + \mathbb Z$ for all $i\in \llbracket n \rrbracket $.

\end{proposition}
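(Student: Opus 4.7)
The plan is to dissect each of the four $C$-regularity conditions of Definition \ref{good tableau} one by one and translate it into a residue-class constraint on the entries of a tableau $T(L)\in\mathcal{B}(\mat{\mu},\mat{\lambda})$. The key bookkeeping device is that every entry of $T_D(L)$ lies in the single coset of $\mathbb{Z}$ determined by the top row $\mat{\ell}=\mat{\lambda}+\rho_D+\mat{1/2}$ (integers if $\lambda_i\in\tfrac{1}{2}+\mathbb{Z}$, half-integers if $\lambda_i\in\mathbb{Z}$), while each $\ell'_{k,1}$ ranges freely in the coset $\mu_k+\mathbb{Z}$. The two hypotheses of the proposition are precisely what is needed to keep distinct entries in distinct cosets mod $\mathbb{Z}$ whenever (i) or (iv) requires a comparison across the two families; conditions (ii) and (iii) will turn out to be automatic consequences of the $D$-standard inequalities.

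For the ``$\Leftarrow$'' direction, assume $\mu_i\notin\mathbb{Z}$ and $\lambda_i\in\tfrac{1}{2}+\mathbb{Z}$ for all $i$, so every entry of $T_D(L)$ is an integer and each $\ell'_{k,1}$ is a non-integer. Condition (ii) follows from the strict inequalities $u_{k,i-1}>u'_{k,i}\geq u_{k,i}$ built into the $D$-standard definition, and (iv) follows since $\ell_{k,i}\in\mathbb{Z}\neq 1/2$. Condition (i) splits into the subcase $a,i\geq 2$, handled by the strict decrease $\ell'_{k,2}>\ell'_{k,3}>\cdots>\ell'_{k,k}$, and the subcase $a=1<i$ with $k\geq 2$, where $\ell'_{k,1}-\ell'_{k,i}$ lies in $\mu_k+\mathbb{Z}\setminus\{0\}$. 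For (iii), the $D$-standard inequality $-u'_{k,2}\geq u_{k,1}$ together with $u'_{k,2}\geq u_{k,2}$ gives $u_{k,1}+u_{k,2}\leq 0$; combined with the strict decrease along the row, this forces $u_{k,j}\leq 0$ for $j\geq 2$, and hence $u_{k,a}+u_{k,i}\leq 0<1$ for every pair $a\neq i$ (so in fact (iii) holds automatically in any $\mathcal{B}(\mat{\mu},\mat{\lambda})$).

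For the ``$\Rightarrow$'' direction I argue by contrapositive, exhibiting an explicit non-$C$-regular tableau whenever a hypothesis fails. If $\lambda_i\in\mathbb{Z}$ for all $i$, then every $\ell_{k,i}\in\tfrac{1}{2}+\mathbb{Z}$, and I construct a tableau in $\mathcal{B}(\mat{\mu},\mat{\lambda})$ with $u_{n-1,1}=1/2$, violating (iv). The $\mathfrak{so}(2n)$-dominance condition $-\lambda_1-\lambda_2\in\mathbb{Z}_{\geq 0}$ together with $\lambda_1\geq\lambda_2$ forces $\lambda_2\leq 0$, so $\ell_2\leq -1/2$, and the interlacing interval $[\ell_2,\ell_1)$ for $u'_{n,2}$ admits the value $-1/2$; setting $u'_{n,2}=-1/2$ then allows $u_{n-1,1}=1/2$, and the remaining entries are filled in by a descending row-by-row recursion. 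If instead $\lambda_i\in\tfrac{1}{2}+\mathbb{Z}$ for all $i$ but $\mu_k\in\mathbb{Z}$ for some $k\geq 2$, I take any $T(L)\in\mathcal{B}(\mat{\mu},\mat{\lambda})$ (non-empty by Remark \ref{nonempty bases}) and reset the otherwise unconstrained entry $\ell'_{k,1}\in\mathbb{Z}$ to $\ell'_{k,2}\in\mathbb{Z}$, violating (i).

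The main technical obstacle will be the descending extension step in the case $\mat{\lambda}\in\mathbb{Z}^n$: one must verify that after imposing $u'_{n,2}=-1/2$ and $u_{n-1,1}=1/2$, the remaining entries of $T_D(L)$ can be chosen consistently with the top row $\mat{\ell}$ and every $D$-standard interlacing inequality. This reduces to a row-by-row verification that each interlacing interval encountered contains a half-integer of the required sign, which ultimately relies on the dominance of $\mat{\lambda}$.
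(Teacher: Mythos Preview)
Your ``$\Leftarrow$'' direction and your handling of $\mu_k\in\mathbb{Z}$ for $k\geq 2$ match the paper's proof essentially verbatim; the paper's argument for $\mu_k\in\mathbb Z$ likewise tacitly needs $k\geq 2$, since it too uses the entry $r'_{k,2}$.

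The gap is in the case $\mat{\lambda}\in\mathbb{Z}^n$. You claim the interlacing interval for $u'_{n,2}$ is $[\ell_2,\ell_1)$ and hence contains $-1/2$, but condition~(\ref{condition 5}) with $k=n$ also imposes $-u'_{n,2}\geq u_{n,1}=\ell_1$, i.e.\ $u'_{n,2}\leq -\ell_1$. When $\lambda_1\geq 1$ (which $\mathfrak{so}(2n)$-dominance permits, e.g.\ $\mat{\lambda}=(1,-1,\ldots)$) this forces $u'_{n,2}\leq -3/2$, so your choice $u'_{n,2}=-1/2$ is illegal and the top-down construction cannot start as written. The paper avoids any such construction: it takes \emph{any} $T(R)\in\mathcal{B}(\mat{\mu},\mat{\lambda})$ and replaces only the bottom entry $r_{11}$ by $1/2$. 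The sole $D$-standard constraint on $r_{11}$ is $-r'_{22}\geq r_{11}>r'_{22}$, and every $D$-standard tableau with half-integer entries already has $r'_{22}\leq -1/2$, so $1/2$ is always admissible and no other entry needs adjusting. Your plan is repaired by the same observation---any valid $u'_{n,2}$ automatically satisfies $u'_{n,2}\leq -1/2$, hence $u_{n-1,1}=1/2$ lies in $(u'_{n,2},-u'_{n,2}]$ regardless---but then the ``descending row-by-row recursion'' you flag as the main technical obstacle becomes unnecessary.
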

\begin{proof}
If $\lambda_{i}\in\mathbb{Z}$ for some $i$, then $\mat{\lambda}\in\mathbb{Z}^n$, and $\mat{\ell}\in(\frac{1}{2}+\mathbb{Z})^n$. In particular, for any $T(R)\in \mathcal{B}(\mat{\mu},\mat{\lambda})$ we have $r_{11}\in\frac{1}{2}+\mathbb{Z}$. Moreover, as $T_{D}(R)$ is $D$-standard, we have $-r'_{22}\geq r_{11}>r'_{22}$, and $-r'_{22}\geq \frac{1}{2}>r'_{22}$. So, The tableau $T(S)=T(R+(\frac{1}{2}-r_{11})\delta^{(11)})\in \mathcal{B}(\mat{\mu},\mat{\lambda})$ and $s_{11}=\frac{1}{2}$.

Suppose $\mu_{k}\in\mathbb{Z}$ for some $k$, and $\lambda_{i}\notin \mathbb{Z}$ for any $i\in \llbracket n \rrbracket $. In this case we have $\ell_{k}\in \mathbb{Z}$, and given any tableau $T(R)\in \mathcal{B}(\mat{\mu},\mat{\lambda})$ we have $r'_{k,2}\in\mathbb{Z}$, and $T(S)=T(R+(r'_{k,2}-r'_{k,1})\delta'^{(k,1)})\in  \mathcal{B}(\mat{\mu},\mat{\lambda}) $ satisfies that $s'_{k1}=s'_{k2}$.
 To complete the proof we use that the entries of a $D$-standard tableaux satisfy the following properties:
\begin{itemize}
\item[(i)] $w_{ki}\neq w_{kj}$ for any $k\in \llbracket n \rrbracket $, and $1\leq i\neq j\leq k$,
\item[(ii)] $w'_{ki}\neq w'_{kj}$ for any $2\leq k\leq n$, and $2\leq i\neq j\leq k$, 
\item[(iii)]  $w_{k,i}+w_{k,j}\leq 0$   for any  $1\leq i\neq j\leq k$, and $2\leq k\leq n-1$.
\end{itemize}
The last property follows from $w_{k2}\in\{-\frac{1}{2},0\}+\mathbb{Z}_{\leq 0}$, $w_{k2}>\cdots>w_{kk}$, and $-w'_{k,2}\geq w_{k,1}>w'_{k2}\geq w_{ki}$ for any $i>1$.
\end{proof}

\emph{From now on, $\mat{\lambda}$ will denote any dominant $\mathfrak{so}(2n)$ weight in $(\frac{1}{2}+\mathbb{Z})^n$, $\mat{\mu}$ any vector in $(\mathbb{C}\setminus\mathbb{Z})^n$, and $\mat{\ell}:=\mat{\lambda}+\rho_{D}+\mat{\frac{1}{2}}=(\lambda_{1}-1+\frac{3}{2},\ldots,\lambda_{n}-n+\frac{3}{2})$.}

 Recall the definition of the polynomials $p_z$ from Remark \ref{center is polynomial in lambda}.

\begin{theorem}\label{module structure}
 The space $V(\mat{\mu}, \mat{\lambda})$ has a structure of a $\mathfrak{sp}(2n)$-module, with action of the generators given by the formulas (\ref{Formulas sp}, \ref{Formulas sp1}, \ref{Formulas sp2}, \ref{Formulas sp3}, \ref{operator A}, \ref{operator B}, \ref{operator C}, \ref{operator D}). Moreover, the action of any $z\in Z(U(\mathfrak{sp}(2n)))$ is given by multiplication by $p_{z}(\mat{\lambda}+\mat{1})$, i.e. $V(\mat{\mu}, \mat{\lambda})$ has central character $\chi_{\smat{\lambda} + \smat{1}\mat{}}$.
\end{theorem}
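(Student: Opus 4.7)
The plan is to mirror, \emph{mutatis mutandis}, the rational-function / Zariski-density argument of Theorem \ref{C-module variables}, now regarding the left column and the top row of $T(W)$ as the ``generic'' entries while $T_D(W)$ below the top row varies over integer shifts within the $D$-standard locus. First I would use Proposition \ref{Prop: good basis of tableaux} to guarantee that $\mathcal{B}(\mat{\mu},\mat{\lambda})$ is $C$-regular, so that the coefficients $A_{ki},B_{ki},C_{ki},D_{kijm}$ specialize to finite scalars at every $T(W)\in\mathcal{B}(\mat{\mu},\mat{\lambda})$. Following the convention implicit in Theorem \ref{fd sp}, I would declare zero any shifted tableau $T(W+\delta)$ whose $T_D$-part is not $D$-standard, or whose left column escapes $\mat{\mu}+\mathbb{Z}^n$; this yields formal linear operators on $V(\mat{\mu},\mat{\lambda})$ attached to each generator of $\mathfrak{sp}(2n)$.

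To check the defining relations, I would fix a tensor-algebra element $u$ that maps to $0\in U(\mathfrak{sp}(2n))$, expand $u\cdot T(W)=\sum_p g_p(W)T(W+Z_p)$ via the formulas, and aim to show that each $g_p(W)$ vanishes. Treating the entries of $W$ as formal variables, each $g_p$ becomes a rational function in $n^2+n$ variables, exactly as in the proof of Theorem \ref{C-module variables}. For any $\mathfrak{sp}(2n)$-dominant $\mat{\lambda}''$ and any $T(R)\in C_{st}^{\mat{\lambda}''}$, the translation device of Theorem \ref{C-module variables} — replacing $\mat{\lambda}''$ by $\mat{\lambda}''-N(1,2,\ldots,n)$ with $N$ larger than the combinatorial complexity of $u$ — places $T(R)$ and every intermediate $T(R+Z_p)$ in the $C$-standard locus of the shifted weight, so that Theorem \ref{fd sp} forces $g_p(R)=0$. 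I would then apply Lemma \ref{density lemma} to conclude $g_p\equiv 0$ as rational functions, and finally specialize the top row to $\mat{\ell}$ and the left column to $\mat{\mu}$ to obtain $g_p(W)=0$ on every $T(W)\in\mathcal{B}(\mat{\mu},\mat{\lambda})$.

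The central character assertion follows by applying the same argument to $z\in Z(U(\mathfrak{sp}(2n)))$: the previous step implies that $z$ acts on each $T(W)$ by a scalar equal to a rational function of the top row, and this scalar is identified via Harish-Chandra with $p_z(W_{\rm top}-\rho_C-\mat{\frac{1}{2}})=p_z(\mat{\lambda}+\mat{1})$, giving the central character $\chi_{\smat{\lambda}+\smat{1}\mat{}}$. The main technical delicacy, and the step I expect to absorb the most effort, is reconciling the two vanishing conventions: Theorem \ref{fd sp} treats non-$C$-standard tableaux as zero, while the action on $V(\mat{\mu},\mat{\lambda})$ treats non-$D$-standard tableaux as zero. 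The two conventions must be made to agree throughout the density argument by choosing $N$ large enough that every $C$-standard inequality of Definition \ref{C-standard} holds with margin exceeding the maximal step produced by $u$, so that on the $C$-standard side the only tableaux discarded are precisely those that fail $D$-standardness in the corresponding entries after specializing back to $\mat{\ell}$ and $\mat{\mu}$; in particular the additional $C$-standard bounds $-\tfrac{1}{2}\geq\ell'_{k,1}$ and $\ell'_{k,1}\geq\ell_{k,1}$ become automatic after the translation, and only the $D$-standard constraints governed by $-u'_{k,2}\geq u_{k,1}$ remain effective at the specialization.
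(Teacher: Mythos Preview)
Your overall strategy matches the paper's: fix $T(W)\in\mathcal{B}(\mat\mu,\mat\lambda)$, expand $u\cdot T(W)$ with the vanishing convention, regard the resulting coefficients $g_p$ as rational functions, and kill them on a Zariski-dense set of $C$-standard tableaux. The gap is in the translation step. The shift from Theorem~\ref{C-module variables} is engineered so that \emph{no} intermediate tableau ever leaves the $C$-standard locus; used verbatim it only shows that the uncut coefficients (those computed with no vanishing convention, as in $V(X)$) are identically zero. These are not your $g_p$: since $u\cdot T(W)$ in $V(\mat\mu,\mat\lambda)$ is built step by step with certain intermediate tableaux killed, each $g_p$ is a proper sub-sum whose shape depends on the specific vanishing pattern at the fixed $W$, and the vanishing of the full sum says nothing about the sub-sum.

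What is needed instead is a shift $T(S)$, depending on $T(W)$, such that for every integer tableau $Z$ of bounded size one has
\[
T(R+S+Z)\in C_{st}^{\mat{\bar\beta}}\quad\Longleftrightarrow\quad T(W+Z)\in\mathcal{B}(\mat\mu,\mat\lambda),
\]
so that the step-by-step computation of $u\cdot T(R+S)$ inside $L_C(\mat{\bar\beta})$ discards exactly the same intermediate tableaux as the computation of $u\cdot T(W)$ in $V(\mat\mu,\mat\lambda)$, and hence produces the \emph{same} rational functions $g_p$, now evaluated at $R+S$. The paper does this explicitly: one takes $s_{ki}=w_{ki}-N-\tfrac32$ and $s'_{ki}=w'_{ki}-N-\tfrac32$ for $i\ge 2$, while the left-column shifts $s'_{k1}$ are placed at a different scale (roughly $\lambda_1-N$ or $\lambda_1-N/2$, according to whether $\mu_k\in\tfrac12+\mathbb Z$). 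This makes the $C$-standard inequalities touching $\ell'_{k1}$ hold with margin exceeding $M$ and hence become inert, while the remaining $C$-standard inequalities on $T(R+S+Z)$ replicate exactly the $D$-standard inequalities on $T_D(W+Z)$. Your final paragraph correctly isolates which constraints must become automatic and which must stay live, but the uniform shift of Theorem~\ref{C-module variables} cannot separate the two; the fix is precisely to make $S$ depend on $W$ as above.
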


\begin{proof}

We use the same reasoning as in the proof of Theorem \ref{C-module variables}. For reader's convenience, we outline the main idea again.
By Proposition \ref{Prop: good basis of tableaux}, the basis $\mathcal{B}(\mat{\mu}, \mat{\lambda})$ of $V(\mat{\mu}, \mat{\lambda})$ consists of $C$-regular tableaux. In order to prove that $V(\mat{\mu}, \mat{\lambda})$ has a structure of $\mathfrak{sp}(2n)$-module, it is sufficient to prove that $u\cdot T(W)=0$ for any any $T(W)\in \mathcal{B}(\mat{\mu}, \mat{\lambda})$ and any $u$ in the tensor algebra of $\mathfrak{sp}(2n)$ which is zero in $U(\mathfrak{sp}(2n))$. Let us fix such $u$ and $T(W)$.

We can write  $u\cdot T(W)=\sum_if_i(W)T(W+Z_i)$, where $f_i$ are the rational functions that appear applying the formulas (\ref{operator A}, \ref{operator B}, \ref{operator C}, \ref{operator D}), $Z_i$ are tableaux with integer entries, and $T(W+Z_i)$ are tableaux in $\mathcal{B}(\mat{\mu}, \mat{\lambda})$.

Let $M$ be the maximum degree of the monomials $u_j$ such that $u = \sum_j u_j$.
Fix an even integer $N$ bigger than $\max\{2|\lambda_{1}|,\ M^{2n}\}$.
  Note that for any dominant $\mathfrak{sp}(2n)$-weight $\mat{\beta}$, the weight $\mat{\bar{\beta}}:=\mat{\beta}+\mat{\lambda-\mat{N}-\mat{3/2}}$ is also  a dominant $\mathfrak{sp}(2n)$-weight. 
  
  Let $T(S)$ be the tableau of type $C$ with entries: $s_{ki}=w_{ki}-N-3/2$, $s'_{ki}=w'_{ki}-N-3/2$ for $i\neq 1$, and $$s'_{k1}=\begin{cases}

\lambda_1- N-3/2, & \text{  if  } \mu_{k}\in\frac{1}{2}+\mathbb{Z},\\

\lambda_1-\frac{N}{2}-3/2, & \text{  if  } \mu_{k}\notin\frac{1}{2}+\mathbb{Z}.
\end{cases}$$

Note that for any $T(R)\in C_{st}^{\beta}$ we have  $T(\bar{R}):=T(R+S)\in C_{st}^{\mat{\bar{\beta}}}$, and for any integer tableau $Z$ we have  $T(\bar{R}+Z)\in C_{st}^{\mat{\bar{\beta}}}$ if and only if $T(W+Z)\in \mathcal{B}(\mat{\mu}, \mat{\lambda})$.   Therefore $u\cdot T(\bar{R})=\sum_if_i(\bar{R})T(\bar{R}+Z_i)$. But since $u\cdot T(\bar{R})=0$ we obtain that  $f_i(\bar{R})=0$ for any $T(\bar{R})\in C_{st}^{\bar{\beta}}$ and any dominant $\mathfrak{sp}(2n)$-weight $\beta$.
 We now apply Lemma \ref{density lemma} to the polynomial $\bar{f_i}(X) = f_i(X+S)$ and
 conclude $\bar{f}_i=0$. Hence $f_i =0$ and consequently  $u\cdot T(W)=0$ as desired.

For the second statement, consider $z\in Z(U(\mathfrak{sp}(2n)))$ and $T(W)\in\mathcal{B}(\mat{\mu},\mat{\lambda})$, and let   $z\cdot T(W)=\sum_j g_j(W)T(W+Z_{j})$. If $Z_{j}\neq 0$, then $g_j(R)=0$ for any  tableau  $T(R)\in C_{st}^{\mat{\bar{\beta}}}$, and any dominant $\mathfrak{sp}(2n)$ weight $\mat{\beta}$. So, by Lemma \ref{density lemma} we conclude $z\cdot T(W)= g(W)T(W)$ for some rational function $g$.  To compute explicitly $g(W)$ we first consider $W$ such that $T(W) \in C_{st}^{\mat{\beta}}$ for a dominant $\mathfrak{sp}(2n)$ weight $\mat{\beta}$. Then $g(W) = p_z(\mat{\beta}) = p_z(W_{\rm top} - \rho_C - \mat{\frac{1}{2}})$.  Consider $h(X):=g(X)-p_{z}(X_{\rm top}-\rho_C - \mat{\frac{1}{2}})$. Since $h(W) = 0$ for all $T(W) \in C_{st}^{\mat{\bar{\beta}}}$ for all dominant $\mathfrak{sp}(2n)$-weights $\mat{\beta}$, by Lemma \ref{density lemma} we have $h= 0$. 
But then for $T(U) \in\mathcal{B}(\mat{\mu},\mat{\lambda})$, we have that
$$
z \cdot T(U) = p_{z}(U_{\rm top}-\rho_C - \mat{\frac{1}{2}}) T(U) = p_{z}(\mat{\lambda} + \rho_D + \mat{\frac{1}{2}} -\rho_C - \mat{\frac{1}{2}}) T(U) = p_{z}(\mat{\lambda} + \mat{1}) T(U).
$$
\end{proof}

\subsection{Support, primitive vectors, and weight multiplicities of $V(\mat{\mu}, \mat{\lambda})$}

To compute the weights of the basis vectors of $V(\mat{\mu}, \mat{\lambda})$ we need the following definition.
\begin{definition}

Given any tableau $T(L)$ of type $C$, we define the \emph{$C$-weight of $T(L)$} as $\omega(T(L))=(\omega_1(L),\omega_2(L),\ldots,\omega_n(L))$, where 
\begin{align}\label{C-weights}
\omega_k(L):=2\sum_{i=1}^{k}\ell'_{ki}-\sum_{i=1}^{k}\ell_{ki}-\sum_{i=1}^{k-1}\ell_{k-1,i}+k-\frac{1}{2}.
\end{align}

In particular, the $C$-weight of $T(L)$ in $\mathcal B (\mat{\mu}, \mat{\lambda})$ is its $\mathfrak 
 h$-weight written as an element of $\mathbb C^n$.   
\end{definition}
\begin{remark} \label{rem-sum}One way to write the formula for $\omega(T(L))$ in a more compact form is to use notation for the sum of the entries if the $k$th and $k'$th row of $T(\ell)$: $S_k(\ell') =\sum_{i=1}^{k}\ell'_{ki}$ and $S_k(\ell) =\sum_{i=1}^{k}\ell_{ki}$. Then 
$$
\omega(T(L)) = 2 S(\ell') - S(\ell) - S(\ell_{-1}) - \rho_C - \mat{\frac{1}{2}},
$$
where $S(\ell) = (S_1(\ell),S_2(\ell),...,S_n(\ell))$, $S(\ell_{-1}) = (0,S_1(\ell),...,S_{n-1}(\ell))$. 
\end{remark}

We next introduce two important tableaux in $\mathcal B(\mat{\mu}, \mat{\lambda})$.

\begin{definition}\label{T(W)}
Let $\mat{\mu}=(\mu_{1},\ldots,\mu_{n})\in\mathbb{C}^{n}$, $\mat{\lambda}\in(\frac{1}{2}+\mathbb{Z})^{n}$ be a dominant  $\mathfrak{so}(2n)$-weight. Recall that $\mat{\ell}=\mat{\lambda}+\rho_{D}+\mat{\frac{1}{2}}\in\mathbb{Z}^{n}$. 

\begin{itemize}
\item[(i)] By $T(W^{ \smat{\mu},\smat{\lambda}\mat{}})$ we denote the tableau of type $C$ with entries $w'_{k1}=\mu_{k}$, $w'_{ki}=\ell_{i}$ for  $i\geq 2$, and $w_{ki}=\ell_{i}$ for $i\geq 1$. Namely,
$$
\begin{small}
T(W^{ \smat{\mu},\smat{\lambda}\mat{}})=\begin{tabular}{cccccccccc}
\xymatrixrowsep{0.2cm}
\xymatrixcolsep{0.1cm}\xymatrix @C=0.5em {
&\ell_{1}& &\ell_{2}& &\ell_{3}&&\cdots&&\ell_{n}\\
\mu_{n}& &\ell_{2}& &\ell _{3}&&\cdots&&\ell_{n}&\\
&\ell_{1}& &\ell_{2}& &\cdots&&\ell_{n-1}&&\\
\mu_{n-1}& &\ell_{2}& &\cdots&&\ell_{n-1}&&&\\
&\vdots& &\vdots&&\vdots&&&&\\
\vdots& &\vdots&&\vdots&&&&&\\
&\ell_{1}& &\ell_{2}& &&&&&\\
\mu_{2}& &\ell_{2}& &&&&&&\\
&\ell_{1}& && &&&&&\\
\mu_{1}& && &&&&&&\\
}\end{tabular}
\end{small}
$$

\item[(ii)] By  $T(W_{ \smat{\mu},\smat{\lambda}\mat{}})$ we denote the tableau of type $C$ with entries $w'_{k1}=\mu_k$, $w_{n1}=\ell_1$, $w_{k-1,1}=1-w_{k1}$ for $2\leq k\leq n$, and 
 $w_{k, i}=w'_{ki}=\ell_i$, for any $i \geq 2$. The tableaux $T(W_{ \smat{\mu},\smat{\lambda}\mat{}})$ for $n=3$ and $n=4$, respectively, are as follows
{\small $$\ \ \ \ \ \ \ \ \begin{tabular}{cccccccccc}
\xymatrixrowsep{0.2cm}
\xymatrixcolsep{0.1cm}\xymatrix @C=0.5em {
&\ell_1& &\ell_2&&\ell_3&&&&\\
\mu_{3}& &\ell_2&&\ell_3&&&&&\\
&1-\ell_{1}& &\ell_{2}& &&&&&\\
\mu_{2}& &\ell_{2}& &&&&&&\\
&\ell_{1}& && &&&&&\\
\mu_{1}& && &&&&&&\\
}\end{tabular}\begin{tabular}{cccccccccc}
\xymatrixrowsep{0.2cm}
\xymatrixcolsep{0.1cm}\xymatrix @C=0.5em {
&\ell_{1}& &\ell_{2}& &\ell_{3}&&\ell_4&\\
\mu_{4}& &\ell_{2}& &\ell _{3}&&\ell_4&&\\
&1-\ell_{1}& &\ell_{2}& &\ell_3&&\\
\mu_{3}& &\ell_{2}& &\ell_3&&&\\
&\ell_1& &\ell_2&&&&\\
\mu_{2}& &\ell_2&&&&&\\
&1-\ell_{1}& && &&&\\
\mu_{1}& && &&&&\\
}\end{tabular}$$}
The values  $\mu_i= \mat{\frac{1}{2}}$ will play a special role in the paper as in this case the tableaux are highest weight vectors of submodules of $V(\mat{\mu}, \mat{\lambda})$. We set  $T(W_{\smat{\lambda}\mat{}}) := T(W_{ \smat{\frac{1}{2}},\smat{\lambda}\mat{}})$.
\end{itemize}
 \end{definition}

Recall that $\tau_{1}\mat{\lambda}=(-\lambda_1,\lambda_2,\ldots,\lambda_n)$. 

\begin{lemma}\label{T(W)-weight}
With the notation of  Definition \ref{T(W)} we have the following.
\begin{itemize}
\item[(i)]
$T(W^{ \smat{\mu},\smat{\lambda}\mat{}})\in \mathcal{B}(\mat{\mu},\mat{\lambda})$ and $\omega(T(W^{ \smat{\mu},\smat{\lambda}\mat{}}))=2\mat{\mu}+\mat{\lambda}-(2\lambda_1)\mat{1}$.

\item[(ii)]  $T(W_{ \smat{\mu},\smat{\lambda}\mat{}})\in \mathcal{B}(\mat{\mu},\mat{\lambda})$ and  
$$\omega(T(W_{ \smat{\mu},\smat{\lambda}\mat{}}))=\begin{cases}
2\mat{\mu}+\mat{\lambda}, & \text{ if $n$ is even,}\\
2\mat{\mu}+\tau_{1}\mat{\lambda}, & \text{ if $n$ is odd.}
\end{cases}$$
\end{itemize}
\end{lemma}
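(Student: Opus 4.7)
The plan is to verify in each case that the tableau lies in $\mathcal{B}(\mat{\mu},\mat{\lambda})$ and then compute its $C$-weight by a direct substitution into formula \eqref{C-weights}. Membership in $\mathcal{B}(\mat{\mu},\mat{\lambda})$ splits into two checks: the first column condition $T_{C\setminus D}(W)\in\mat{\mu}+\mathbb{Z}^{n}$, which is immediate since $w'_{k1}=\mu_k$ by construction, and the $D$-standardness of the remaining pattern $T_D(W)$, which reduces the conditions \eqref{condition 5}, \eqref{condition 6} to finitely many scalar inequalities in the $\ell_i$.

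For part (i), in $T(W^{\smat{\mu},\smat{\lambda}\mat{}})$ all entries of $T_D$ in positions $(k,i)$ and $(k,i)'$ with $i\geq 2$ equal $\ell_i$, so the required $D$-standard chain collapses to the two inequalities $\ell_i>\ell_{i+1}$ (equivalent to $\lambda_i\geq \lambda_{i+1}$) and $-\ell_2\geq \ell_1$ (equivalent to $-\lambda_1-\lambda_2\geq 0$), both of which are built into dominance of $\mat{\lambda}$ over $\mathfrak{so}(2n)$. For the weight, I plug the entries into \eqref{C-weights}: the $\mathfrak h^*$-term from $\mu_k$ contributes $2\mu_k$, and a short telescoping gives
\[
2\sum_{i=2}^{k}\ell_i-\sum_{i=1}^{k}\ell_i-\sum_{i=1}^{k-1}\ell_i=\ell_k-2\ell_1.
\]
Replacing $\ell_j$ by $\lambda_j-j+3/2$ and combining with the $k-1/2$ term yields $\omega_k=2\mu_k+\lambda_k-2\lambda_1$, matching $2\mat{\mu}+\mat{\lambda}-(2\lambda_1)\mat{1}$.

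For part (ii), the recursion $w_{k-1,1}=1-w_{k,1}$ with $w_{n,1}=\ell_1$ forces the first-column entries of $T_D$ to alternate between $\ell_1$ and $1-\ell_1$ as $k$ decreases. So I must verify that \emph{both} values satisfy $-\ell_2\geq w_{k,1}>\ell_2$: for $w_{k,1}=\ell_1$ these are precisely the dominance inequalities, and for $w_{k,1}=1-\ell_1$ they rewrite as $\lambda_1\geq \lambda_2$ and $\lambda_1+\lambda_2<1$, both implied by dominance (using $\lambda_i\in\tfrac12+\mathbb{Z}$ to pass from $\leq 0$ to $<1$). The remaining inequalities are identical to those in (i). The key simplification in the weight computation is the identity $w_{k,1}+w_{k-1,1}=1$ for $k\geq 2$, which cancels the dependence on the alternating pattern and leaves $\omega_k=2\mu_k+\lambda_k$ for $k\geq 2$ after the same telescoping as before.

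The only case that remembers the parity of $n$ is $\omega_1=2\mu_1-w_{1,1}+\tfrac12$, since there is no $w_{0,1}$ to pair with. Tracing the alternation from $w_{n,1}=\ell_1$ down to $k=1$, one has $w_{1,1}=\ell_1$ when $n-1$ is even and $w_{1,1}=1-\ell_1$ otherwise, leading to $\omega_1=2\mu_1-\lambda_1$ when $n$ is odd and $\omega_1=2\mu_1+\lambda_1$ when $n$ is even. This is exactly the statement that $\omega(T(W_{\smat{\mu},\smat{\lambda}\mat{}}))$ equals $2\mat{\mu}+\mat{\lambda}$ or $2\mat{\mu}+\tau_1\mat{\lambda}$ depending on parity. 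I expect the main bookkeeping obstacle to be keeping track of the alternation in $w_{k,1}$ and of the shift $\rho_D+\mat{1/2}$; both are mechanical once the identity $w_{k,1}+w_{k-1,1}=1$ is exploited.
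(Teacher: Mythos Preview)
Your proof is correct and follows essentially the same approach as the paper's own proof: direct verification using formula \eqref{C-weights}. The paper in fact only writes out the weight computation for part (i), declaring that ``all statements follow by a direct verification,'' whereas you supply the membership checks and the part (ii) computation in full, including the key observation $w_{k,1}+w_{k-1,1}=1$ and the parity analysis of $w_{1,1}$.
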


\begin{proof}
All statements follow by a direct verification. For reader's convenience we prove the formula for $\omega(T(W^{ \smat{\mu},\smat{\lambda}\mat{}}))$. Using Formula (\ref{C-weights}), the $k$-th entry of $\omega(T(W^{ \smat{\mu},\smat{\lambda}\mat{}}))$ equals
\begin{align*}
2\mu_{k}+2\sum_{i=2}^{k}\ell_{i}-\sum_{i=1}^{k}\ell_{i}-\sum_{i=1}^{k-1}\ell_{i}+k-\frac{1}{2}=& 2\mu_{k}-2\ell_{1}+\ell_k+k-\frac{1}{2}\\
=& 2\mu_{k}+\lambda_k+1+(-2\lambda_1-1),
\end{align*}
as needed. \end{proof}

Let $\mathfrak n$ be the nilpotent radical of the fixed Borel subalgebra of $\mathfrak{sp}(2n)$, in particular, $\Delta_{\mathfrak n} = \{ \varepsilon_i \pm \varepsilon_j, -\varepsilon_k\; | \; i < j  \}$.

\begin{theorem} \label{HW tableau}
The tableau $T(W_{\smat{\lambda}\mat{}})$ is an $\mathfrak n$-primitive vector of $V(\mat{\frac{1}{2}},\mat{\lambda})$.

\end{theorem}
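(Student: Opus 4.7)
The plan is to verify directly that $F \cdot T(W_{\mat{\lambda}}) = 0$ for each simple root vector of $\mathfrak n$. Since $\mathfrak n$ is generated as a Lie algebra by its simple root spaces, and the simple roots of $\mathfrak{sp}(2n)$ are $\{-2\varepsilon_1,\varepsilon_1-\varepsilon_2,\ldots,\varepsilon_{n-1}-\varepsilon_n\}$ (cf.\ \S\ref{subsec-cn}), it suffices to treat the root vectors $F_{-1,1}$ and $F_{k-1,k}$ for $k=2,\ldots,n$.

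First I would prove the stronger vanishing $F_{-k,k}T(W_{\mat{\lambda}})=0$ for every $k\in\llbracket n\rrbracket$. By formula (\ref{Formulas sp2}) it suffices to show each coefficient $B_{k,i}(W_{\mat{\lambda}})$ is zero. For $i=1$, the factor $(2\ell'_{k,1}-1)$ in (\ref{operator B}) vanishes because $\ell'_{k,1}(W_{\mat{\lambda}})=\mu_k=\tfrac{1}{2}$. For $i\geq 2$ the factor $(\ell_{k,i}-\ell'_{k,i})$ in $\prod_a(\ell_{k,a}-\ell'_{k,i})$ vanishes because $\ell_{k,i}(W_{\mat{\lambda}})=\ell_i=\ell'_{k,i}(W_{\mat{\lambda}})$. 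Setting $k=1$ in particular takes care of $F_{-1,1}$.

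For the remaining simple root vectors I would invoke the commutator identity (\ref{Formulas sp5}) together with the previous step, which yields
\[
2F_{k-1,k}T(W_{\mat{\lambda}})=-F_{-k,k}F_{k-1,-k}T(W_{\mat{\lambda}}).
\]
Expanding $F_{k-1,-k}T(W_{\mat{\lambda}})$ via (\ref{Formulas sp3}) produces a $C$-sum over tableaux $T(W_{\mat{\lambda}}-\delta^{(k-1,i)})$ and a $D$-sum over tableaux $T(W'_{i,j,m})$, where $W'_{i,j,m}=W_{\mat{\lambda}}+\delta^{(k,i)'}+\delta^{(k-1,j)}+\delta^{(k-1,m)'}$. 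Each $C$-sum tableau preserves row $k$ of $W_{\mat{\lambda}}$, so the first step forces $F_{-k,k}$ to annihilate it. On each $D$-sum tableau only the $i$-th primed entry of row $k$ is perturbed, so the same argument forces $B_{k,i'}(W'_{i,j,m})=0$ for $i'\neq i$, and only the $B_{k,i}(W'_{i,j,m})$ contribution survives, yielding the single output tableau $T(W_{\mat{\lambda}}+\delta^{(k-1,j)}+\delta^{(k-1,m)'})$.

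Collecting coefficients reduces the whole problem to the rational-function identity
\[
\sum_{i=1}^{k}D_{k,i,j,m}(W_{\mat{\lambda}})\,B_{k,i}(W'_{i,j,m})=0
\]
for every pair $(j,m)$, and this is the main obstacle. My intended approach is a direct computation using the explicit formulas (\ref{operator A})--(\ref{operator D}) and the specialised values $\ell'_{k,1}=\tfrac{1}{2}$, $\ell'_{k,i}=\ell_i$ for $i\geq 2$: the factor $\prod_a(\ell^{2}_{k-1,j}-\ell'^{\,2}_{k,a})$ in $D_{k,i,j,m}$ together with the shifted products $\prod_a(\ell_{k,a}-\ell'_{k,i}-1)$ and $\prod_a(\ell_{k-1,a}-\ell'_{k,i}-1)$ in $B_{k,i}(W'_{i,j,m})$ should share enough common factors for the $i=1$ and $i\geq 2$ contributions to telescope against each other. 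The $n=2$ case can be verified by hand in a few lines; the same cancellation pattern is expected for general $k$. If the bookkeeping becomes unwieldy, a fallback is to treat the identity as a polynomial identity in generic entries via the module $V(X)$ of Theorem \ref{C-module variables} and then specialise using the density principle of Lemma \ref{density lemma}.
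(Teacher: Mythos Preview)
Your reduction is correct and matches the paper's approach exactly: prove $B_{ki}(W_{\mat{\lambda}})=0$ for all $i$, then use the commutator (\ref{Formulas sp5}) to write $2F_{k-1,k}T(W_{\mat{\lambda}})=-F_{-k,k}F_{k-1,-k}T(W_{\mat{\lambda}})$, kill the $C$-sum because row $k'$ and row $k$ are untouched, and for the $D$-sum observe that only the $t=i$ term of $F_{-k,k}$ survives. So the whole statement does reduce to the identity $\sum_{i=1}^{k}D_{k,i,j,m}(W_{\mat{\lambda}})\,B_{k,i}(W'_{i,j,m})=0$ for every pair $(j,m)$, exactly as you say.

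The gap is that you stop at a heuristic for this identity, and this is precisely where the content is. The expected ``telescoping'' does not happen uniformly in $j$; the paper splits into two cases. For $j\geq 2$ each summand is already zero: if $i\neq j$ then the factor $\prod_{a\neq i}(\ell_{k-1,j}^{2}-\ell'^{\,2}_{k,a})$ in $D_{k,i,j,m}$ contains $a=j$ and vanishes because $\ell_{k-1,j}=\ell'_{k,j}=\ell_j$; if $i=j$ then after the shift $\ell'_{k,j}\mapsto\ell_j+1$ and $\ell_{k-1,j}\mapsto\ell_j+1$, the factor $(\ell_{k-1,j}-\ell'_{k,j})$ inside $B_{k,j}$ vanishes. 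The only nontrivial case is $j=1$. There, after factoring out the $m$- and $j$-independent pieces and using $\ell_{k,1}+\ell_{k-1,1}=1$, the identity becomes
\[
\sum_{i=1}^{k}\frac{\prod_{a\neq 1}^{k-1}(c_a-c_i-1)}{\prod_{a\neq i}^{k}(c_a-c_i)}=0,\qquad c_1=\ell'_{k,1}=\tfrac12,\ c_a=\ell_{k,a}\ (a\geq 2),
\]
which is exactly the vanishing of the $x^{k-1}$-coefficient in the Lagrange interpolation of the degree-$(k-2)$ polynomial $f(x)=\prod_{a\neq 1}^{k-1}(c_a-x-1)$ at the $k$ nodes $c_1,\ldots,c_k$. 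This is the mechanism the paper uses; your ``telescoping'' intuition is pointing at it but does not substitute for it. Your density fallback does not apply here: Lemma \ref{density lemma} concerns identities that hold on all $C$-standard tableaux, whereas $T(W_{\mat{\lambda}})$ is a constrained (and never $C$-standard, since $\ell'_{k,1}=\tfrac12>-\tfrac12$) tableau, so there is no Zariski-dense set on which the identity is known a priori.
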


\begin{proof}  To prove that $\mathfrak n \cdot T(W_{\smat{\lambda}\mat{}}) =0$, we show that $F_{k-1,k}T(W_{\smat{\lambda}\mat{}})=0$ for any $k=2,\ldots,n$, and $F_{-j,j}T(W_{\smat{\lambda}\mat{}}) =0$ for any $j\in \llbracket n \rrbracket $. The proof of these identities is a bit technical and is included in the Appendix. 

 \begin{lemma}\label{family of D-standard}
 For $k<n$,  
 $$f_k(T(R)):=\begin{cases}
T(R+\delta^{(k1)}), & \text{ if  \ }r_{k1}\neq -r'_{k2} \text{ and } r_{k1}\neq -r'_{k+1,2},\\
T(R-\delta^{(k1)}), &\text{ otherwise}\\
\end{cases}$$
 defines a function $f_k:D_{st}^{\mat{\lambda}}\to D_{st}^{\mat{\lambda}}$.
 \end{lemma}

 \begin{proof}
We need to show that $f_k(T(R))$ is $D$-standard with top row $\mat{\ell}$. Since $T(R)$ is $D$-standard, and the only entry of $T(S):=f_k(T(R))$ that is different from the correspondent entry of $T(R)$ is the one in position $(k,1)$, it is sufficient to prove that $-r'_{k,2}\geq s_{k,1}>r'_{s2}$ and $-r'_{k+1,2}\geq s_{k,1}>r'_{k+1,2}$. We have two cases:
\begin{itemize}
\item[Case 1:] $s_{k,1}=r_{k1}+1$. In this case $r_{k1}\neq -r'_{k2}$ and $r_{k1}\neq -r'_{k+1,2}$. Hence\\ $-r'_{k,2}\geq r_{k,1}+1>r'_{k2}$ and $-r'_{k+1,2}\geq r_{k,1}+1>r'_{k+1,2}$, as needed.
\item[Case 2:] $s_{k,1}=r_{k1}-1$. In this case, since $r_{ij}\in\mathbb{Z}$ for any $i,j$, we necessarily have $r_{k,1}\in\mathbb{Z}_{>0}$, and $-r'_{k,2}\geq r_{k,1}-1\geq 0>r'_{k2}$. Also, $-r'_{k+1,2}\geq r_{k,1}-1\geq 0>r'_{k+1,2}$ implies $-r'_{k,2}\geq r_{k,1}+1>r'_{s2}$ and $-r'_{k+1,2}\geq r_{k,1}+1>r'_{k+1,2}$.
\end{itemize}
 \end{proof}

\begin{corollary}\label{many D-standard}
For any $T(R)$ in $D_{st}^{\mat{\lambda}}$ and $A:=\{a_1,\ldots,a_s\}\subseteq \llbracket n-1 \rrbracket $ the tableau $f_{A}(T(R)):=f_{a_1}\circ f_{a_2}\circ \cdots\circ f_{a_{s}}(T(R))$ belongs to $D_{st}^{\mat{\lambda}}$. Moreover, $f_{A}(T(R))=f_{B}(T(R))$ if and only if $A=B$.
\end{corollary}

\begin{remark}\label{same weight}
Note that if two tableaux $T(R), T(R+Z)\in \mathcal{B}(\mat{\mu},\mat{\lambda})$ have the same $C$-weight, then $\sum\limits_{i=1}^{k}z_{ki}\in 2\mathbb{Z}$ for any $k\in \llbracket n-1 \rrbracket $.
\end{remark}

Recall that  $\mathbf{\mathcal{Q}}_C=\mathbb Z \Delta=\{(a_1,\ldots, a_n)\in\mathbb{Z}^{n}\ |\ a_1+\cdots+a_n\in2\mathbb{Z}\}$.

\begin{proposition}\label{support}
 $\Supp(V(\mat{\mu},\mat{\lambda}))=2\mat{\mu}+\mat{\lambda}+\mat{1}+\mathbf{\mathcal{Q}}_C$. 
\end{proposition}
\begin{proof}
Let  $T(W^{ \smat{\mu},\smat{\lambda}\mat{}})$ be as in Definition \ref{T(W)}. Note that $T(W^{( \smat{\mu},\smat{\lambda}\mat{})}
+m\delta'^{(k1)})\in \mathcal{B}(\mat{\mu},\mat{\lambda})$ for any $m\in \mathbb{Z}$ and any $k\in \llbracket n \rrbracket $. Moreover, using Lemma \ref{T(W)-weight} we have 
$$\omega\left(\left\{T\left(W^{( \smat{\mu},\smat{\lambda}\mat{})}+\sum_{k=1}^{n}m_k\delta'^{(k1)}\right)\ |\ (m_{1},\ldots,m_n)\in\mathbb{Z}^n\right\}\right) =2\mat{\mu}+\mat{\lambda}+\mat{1}+(2\mathbb{Z})^n.$$
Furthermore, by Lemma \ref{family of D-standard}, for any $T(R)\in \mathcal{B}(\mat{\mu},\mat{\lambda})$ and $i\in \llbracket n-1 \rrbracket $, we have $T(R+\delta^{(i1)} )\in \mathcal{B}(\mat{\mu},\mat{\lambda})$ or $T(R-\delta^{(i1)} )\in \mathcal{B}(\mat{\mu},\mat{\lambda})$. On the other hand, $\omega(T(R\pm\delta^{(i1)}))=\omega(T(R))\mp(\varepsilon_{i}+\varepsilon_{i+1})$. This completes the proof. \end{proof}

\begin{theorem}\label{number of D-standard tableaux}
 For any  $\mat{\gamma}\in\Supp(V(\mat{\mu},\mat{\lambda}))$ we have 
\begin{equation}\label{dimension of weight spaces}
\dim V(\mat{\mu},\mat{\lambda})^{\gamma}=\frac{1}{2^{n-1}}\dim L_{D}(\mat{\lambda}).
\end{equation}
\end{theorem}
\begin{proof} 
Recall the notation $S_k(L)$ from Remark \ref{rem-sum}.
 Consider the equivalence relation on $D_{st}^{\mat{\lambda}}$ defined by $T(R)\sim T(M)$ if and only if  $S_{k}(R-M)\in 2\mathbb{Z}$ for any $k\in \llbracket n-1 \rrbracket $. Note that for any $T(S)\in D_{st}^{\mat{\lambda}}$ the set $\{f_{A}(T(S))\ |\ A\subseteq \llbracket n-1 \rrbracket \}$, where $f_{A}$ are the functions constructed in Corollary \ref{many D-standard}, is a complete set of representatives for $D_{st}^{\mat{\lambda}}/\sim$. Hence  we have $\frac{1}{2^{n-1}}\dim(L_{D}(\mat{\lambda}))$ equivalence classes.

Given $T(R)\in\mathcal{B}(\mat{\mu},\mat{\lambda})$, we will show that $\{T(W)\in \mathcal{B}(\mat{\mu},\mat{\lambda})\ |\  \omega(T(W))=\omega(T(R))\}$ is in a one-to-one correspondence with the elements of the equivalence class $[T_{D}(R)]$ of $T_{D}(R)$.

Indeed, for any $T(M)\in D_{st}^{\mat{\lambda}}$ such that $T(M)\sim T_{D}(R)$ we consider the tableau $T(\widetilde{M})$ of type $C$ such that $T_{D}(\widetilde{M})=T(M)$, and $T_{C\setminus D}(\widetilde{M})=(\tilde{m}'_{11},\ldots,\tilde{m}'_{n1})$ with 
$\tilde{m}'_{11}=\frac{1}{2}S_{1}(M)$, and $\tilde{m}'_{k1}=r'_{ki}-S_{k}'(M)+\frac{1}{2}S_{k}(M)+\frac{1}{2}S_{k-1}(M)$ for $k\geq 2$. A straightforward computation shows that $\omega(T(\widetilde{M}))=\omega(T(R))$, as desired. For the reverse direction we use Remark \ref{same weight}.\end{proof}

\begin{remark}\label{explicit weight spaces}
The proof of Theorem \ref{number of D-standard tableaux} provides also an explicit basis of the weight spaces $V(\mat{\mu}, \mat{\lambda})^{\gamma}$. Given $T(W)\in \mathcal{B}(\mat{\mu},\mat{\lambda})$, every tableau $T(R)\in[T_{D}(W)]$ can be extended in a unique way to a tableau $T(\widetilde{R})$ in $\mathcal{B}(\mat{\mu},\mat{\lambda})$ such that  $\omega(T(\widetilde{R}))=\omega(T(W))$. Indeed, consider $T_D(\widetilde{R})=T(R)$, and 
$$\tilde{r}'_{k1}:=w'_{k,1}+S_k'(T_{D}(W)-T(R))-\frac{1}{2}S_{k}(T_{D}(W)-T(R))-\frac{1}{2}S_{k-1}(T_{D}(W)-T(R)).$$
\end{remark}

\begin{corollary}
The module $V(\mat{\mu}, \mat{\lambda})$ is a bounded $\mathfrak{sp}(2n)$-module of degree $d(\mat{\lambda}):=\frac{1}{2^{n-1}}\dim(L_{D}(\mat{\lambda}))$.

\end{corollary}

\begin{lemma}
For any dominant $\mathfrak{so}(2n)$-weight $\mat{\lambda}\in(\frac{1}{2}+\mathbb{Z})^n$ the equation  $$
2\mat{\mat{x}}+\mat{\lambda}+\mat{1}\equiv\mat{\lambda}+\varepsilon_{1}\mod \mathcal{Q}_C
$$
has no solution $\mat{x}$ in $(\mathbb{C}\setminus\mathbb{Z})^n$.

\end{lemma}

\begin{proof}
If $\mat{x}$ is a solution of the given equation and $\mat{x}\in (\mathbb{C}\setminus\mathbb{Z})^n$, we have $2\mat{x}+\mat{1}-\varepsilon_{1}\in \mathcal{Q}_C$. This implies
$x_{i}\in\frac{1}{2}+b_i$ with $b_i\in\mathbb{Z}$ for all $i\in \llbracket n \rrbracket $.
However, $n-1+2\sum\limits_{i=1}^n x_{i}=(n-1)+n+2\sum\limits_{i=1}^n b_{i}$ is an odd integer, which is a contradiction. \end{proof}

\begin{proposition} \label{prop-missing}
For any dominant $\mathfrak{so}(2n)$-weight $\mat{\lambda}\in(\frac{1}{2}+\mathbb{Z})^n$ consider the map $\Psi_{\smat{\lambda}\mat{}}:\left(\mathbb{C}\setminus\mathbb{Z}\right)^{n}\longrightarrow \mathbb C^n/\mathcal{Q}_C$ defined by $\Psi_{\smat{\lambda}\mat{}}(\mat{\mu})=\Supp(V(\mat{\mu}, \mat{\lambda}))+\mathcal{Q}_C$. Then $\im(\Psi_{\smat{\lambda}}\mat{})$ is the complement of $\mat{\lambda}+\varepsilon_1+\mathcal{Q}_C$ in $\mathbb C^n/\mathcal{Q}_C$.
\end{proposition}
\begin{proof}
We first apply the preceding lemma to prove that  $\Psi_{\smat{\lambda}\mat{}}(\mat{\mu})$ is contained in the complement of $\mat{\lambda}+\varepsilon_1+\mathcal{Q}_C$. Then we proceed by induction on $n$. For $n=1$ the statement is obvious. Consider now $n>1$ and let $\mat{a}\in\mathbb{C}^n$ be such that $\mat{a}\notin \mat{\lambda}+\varepsilon_1+\mathcal{Q}_C$. If $a_1 - \lambda_1 -1 \notin 2 \mathbb Z$, we choose $\mu_1 = \frac{a-\lambda_1-1}{2}$ and set $\mat{a'} = (a_2,...,a_n)$, $\mat{\lambda'} = (\lambda_2,...,\lambda_n)$. If $a_1 - \lambda_1 -1 \in 2 \mathbb Z$,  we choose $\mu_1 = \frac{a-\lambda_1}{2}$ and set $\mat{a'} = (a_2,...,a_n) + \varepsilon_2$, $\mat{\lambda'} = (\lambda_2,...,\lambda_n)$. In both cases $\mat{a'}\notin \mat{\lambda'}+\varepsilon_2+\mathcal{Q}_C$ and we may apply the induction hypothesis to find $\mat{\mu'} = (\mu_2,...,\mu_n) \in \left(\mathbb{C}\setminus\mathbb{Z}\right)^{n-1}$ such that $2\mat{\mat{\mu'}}+\mat{\lambda'}+\mat{1}=\mat{a'}\mod \mathcal{Q}_C$.
\end{proof}

\subsection{Subquotients of  $V(\mat{\mu}, \mat{\lambda})$}
Let $k\in \llbracket n \rrbracket $ and 
consider $\mat{\mu}\in\left(\mathbb{C}\setminus\mathbb{Z}\right)^{n}$ such that $\mu_{k}\in\frac{1}{2}+\mathbb{Z}$. Then set:
$$
 \mathcal{B}_k^{+}(\mat{\mu},\mat{\lambda}):=\left\{T(W)\in \mathcal{B}(\mat{\mu},\mat{\lambda})\ |\  w'_{k,1}-\frac{1}{2}\in\mathbb{Z}_{\geq 0}\right\}; \; V_k^{+}(\mat{\mu}, \mat{\lambda}):=\Span_\mathbb{C} \mathcal{B}_k^{+}(\mat{\mu},\mat{\lambda}).$$ Recall that $\mathcal C_k = \left\{ x \in \mathcal Q_C \; | \; \langle x,\varepsilon_k \rangle \geq 0 \right\}$.

\begin{theorem}\label{submodule structure}
 If $\mu_{k}\in\frac{1}{2}+\mathbb{Z}$, then $V_k^{+}(\mat{\mu}, \mat{\lambda})$ is a submodule of $V(\mat{\mu}, \mat{\lambda})$ such that  $\Supp V_k^{+}(\mat{\mu}, \mat{\lambda}) \subset  \mat{\eta}+ \mathcal C_k$ for some $\mat{\eta}\in\Supp V_k^{+}(\mat{\mu}, \mat{\lambda}) $.
\end{theorem}
\begin{proof}
To show that $V_k^{+}(\mat{\mu}, \mat{\lambda})$ is a submodule we look at the actions of the generators of $\mathfrak{sp}(2n)$ on a tableau  $T(L)  \in \mathcal{B}_k^{+}(\mat{\mu},\mat{\lambda})$ and see that the tableau $T(L+S-\delta^{(k,1)'})$ that appear in the decomposition is either in $\mathcal{B}_k^{+}(\mat{\mu},\mat{\lambda})$ or the corresponding coefficient $B_{k1}(L+S)$ is zero. For the support, we consider $A=\{T(R)\in \mathcal{B}(\mat{\mu},\mat{\lambda})\ |\ T_{C\setminus D}(R)=\mat{\mu}+(1/2-\mu_k)\varepsilon_{k}\}$. The set $A$ has $\dim L_{D}(\mat{\lambda})$ elements, so we consider $T(S)\in A$ such that $\omega_{k}(T(S))$ is minimum. Then $T(S)\in \mathcal{B}_k^{+}(\mat{\mu},\mat{\lambda})$, and $\mat{\eta}=\omega_{C}(T(S))$ satisfies the condition of the theorem. \end{proof}

\begin{definition}\label{max}
For any dominant $\mathfrak{so}(2n)$ weight $\mat{\lambda}$, and $k\in \llbracket n-1 \rrbracket $ we consider 
$$t_{k}(\mat{\lambda}):=\max_{T(R),T(W)\in D_{st}^{\smat{\lambda}\mat{}}}\left\{\left\lfloor S_k'(R-W)-\frac{1}{2}S_{k+1}(R-W)-\frac{1}{2}S_k(R-W)\right\rfloor\right\}.$$

\end{definition}

\begin{lemma}\label{essential support} 
Set $\mat{\mu}\in\left(\mathbb{C}\setminus\mathbb{Z}\right)^{n}$, and $\mat{\lambda}\in(\frac{1}{2}+\mathbb{Z})^{n}$ a dominant $\mathfrak{so}(2n)$-weight. If $\mu_{k}\in\frac{1}{2}+\mathbb{Z}$, then for any tableau $T(W)\in \mathcal{B}_k^{+}(\mat{\mu},\mat{\lambda})$ such that $w'_{k1}-1/2\geq t_k(\mat{\lambda})$ we have $\omega(T(W))\in \Supp_{\rm ess} V(\mat{\mu}, \mat{\lambda})$. In particular, 
$$
\deg V_k^{+}(\mat{\mu}, \mat{\lambda}) = \deg V(\mat{\mu}, \mat{\lambda}) = \frac{1}{2^{n-1}} \dim L_D (\mat{\lambda}).$$
\end{lemma}
\begin{proof}
By Remark \ref{explicit weight spaces} it is enough to prove that $T(\widetilde{R})\in \mathcal{B}_k^{+}(\mat{\mu},\mat{\lambda})$ for any $T(R)\in [T_{D}(W)]$. However, by definition 
{\small
\begin{align*}
\tilde{r}'_{k1}&:=w'_{k1}+S_k'(T_{D}(W)-T(R))-\frac{1}{2}S_{k+1}(T_{D}(W)-T(R))-\frac{1}{2}S_k(T_{D}(W)-T(R))\\
&\geq t_{k} (\mat{\lambda})+1/2+S_k'(T_{D}(W)-T(R))-\frac{1}{2}S_{k+1}(T_{D}(W)-T(R))-\frac{1}{2}S_k(T_{D}(W)-T(R))\\
&\geq 0.
\end{align*}
}
\end{proof}

Lemma \ref{essential support}, and Theorem \ref{submodule structure} imply that
$$
\mat{\lambda}' + \mathcal C_k \subset \mbox{Supp}_{\rm ess} V_k^{+}(\mat{\mu}, \mat{\lambda})\subset  \mbox{Supp} V_k^{+}(\mat{\mu}, \mat{\lambda}) \subset \mat{\lambda}'' + \mathcal C_k
$$
for some weights $\mat{\lambda}' , \mat{\lambda}''$ in  $\mbox{Supp} V_k^{+}(\mat{\mu}, \mat{\lambda}) $. 
In particular, $\mathcal C (V_k^{+}(\mat{\mu}, \mat{\lambda})) = \mathcal C_k$. 

 The following is straightforward.

\begin{lemma}\label{properties of cones} Let $M$ and $N$ be nontrivial proper submodules of $V(\mat{\mu},\mat{\lambda})$ that have cones $\mathcal C(M), \mathcal C(N)$, respectively. Then the following identities hold
\begin{itemize}
\item[(i)] $M+N$ has cone $\mathcal C(M+N) = \mathcal C(M) \cup \mathcal C(N)$.
\item[(ii)] If $N \subset M$ and $N\neq M$, then $M/N$ has cone $\mathcal C(M/N)=\mathcal C(M)\setminus \mathcal C(N)$.
\item[(iii)] If $M\cap N \neq 0$, then $M \cap N$ has cone $\mathcal C(M \cap N) = \mathcal C(M) \cap \mathcal C(N)$.
\item[(iv)] $\mathcal C (V_k^{+}(\mat{\mu}, \mat{\lambda})) = \mathcal C_k$ and $\mathcal C (V (\mat{\mu}, \mat{\lambda})/V_k^{+}(\mat{\mu}, \mat{\lambda})) = -\mathcal C_k$.
\end{itemize}
\end{lemma}

\begin{definition}\label{def-Sigma} Let $\Sigma$ be a proper nonempty subset  of $ {\rm Int} (2{\mat{\mu}} + \mat{\lambda} + \mat{1/2}) = {\rm Int} (2{\mat{\mu}}) $. We set 
\begin{align*}
V(\mat{\mu}, \mat{\lambda}, \Sigma) &=& &\left( \bigcap_{i\in \Sigma}V_i^+(\mat{\mu}, \mat{\lambda})\right)/\sum_{j\in {\rm Int} (2{\mat{\mu}})\setminus\Sigma}\left( \bigcap_{k \in \Sigma \cup \{ j\}}V_k^+(\mat{\mu}, \mat{\lambda}) \right),\\
V(\mat{\mu}, \mat{\lambda}, \emptyset) &=& &V(\mat{\mu}, \mat{\lambda})/\Big(\sum\limits_{j\in{\rm Int} (2{\mat{\mu}})}V_j^{+}(\mat{\mu}, \mat{\lambda})\Big),\\
V(\mat{\mu}, \mat{\lambda}, {\rm Int} (2{\mat{\mu}})) &=& &\bigcap\limits_{i\in {\rm Int} (2{\mat{\mu}})}V_i^{+}(\mat{\mu}, \mat{\lambda}).
\end{align*}
\end{definition}

We will prove that the modules $V(\mat{\mu}, \mat{\lambda}, \Sigma) $ are simple, see Theorem \ref{thm-s-v}. Theorem \ref{module structure} implies the following.

\begin{lemma} \label{lem-c-ch}The module $V(\mat{\mu}, \mat{\lambda}, \Sigma)$  has central character $\chi_{\smat{\lambda}+ \smat{1}\mat{}}$ and cone $\mathcal C_{\Sigma, \Sigma'}$ for $\Sigma' = {\rm Int} (2{\mat{\mu}})\setminus \Sigma$. 
\end{lemma}

\begin{proof} The cone property follow from Lemma \ref{properties of cones} and the definition of  $V(\mat{\mu}, \mat{\lambda}, \Sigma)$.   For the central character statement we use Theorem \ref{module structure}. \end{proof}

Combining Proposition \ref{support}, Lemma \ref{essential support}, Lemma \ref{lem-c-ch}, and Proposition \ref{prop-class-sp} we obtain:
\begin{theorem} \label{thm-s-v} With the notation of Proposition \ref{prop-class-sp}, we have
$$
S(\chi_{\smat{\lambda} + \smat{1}\mat{}}, 2{\mat{\mu}} + \mat{\lambda} + \mat{1} + \mathcal Q_C,  \Sigma) \simeq V({\mat{\mu}}, \mat{\lambda}, \Sigma).
$$
 In particular, $L_C(\mat{\lambda} + \mat{1}) \simeq V(\mat{\frac{1}{2}}, \mat{\lambda}, \llbracket n \rrbracket )$ for even $n$, and  $L_C(\tau_1\mat{\lambda} + \mat{1}) \simeq V(\mat{\frac{1}{2}}, \mat{\lambda}, \llbracket n \rrbracket )$ for odd $n$. Furthermore, $T(W_{\smat{\lambda}\mat{}})$ is a highest weight vector of $V(\mat{\frac{1}{2}}, \mat{\lambda}, \llbracket n \rrbracket )$ (cf. Theorem \ref{HW tableau}).
 \end{theorem}

\begin{theorem} \label{thm-main} 
\begin{itemize}
\item[(i)] Every infinite-dimensional simple bounded $\mathfrak{sp}(2n)$-module is isomorphic to $V(\mat{\mu}, \mat{\lambda}, \Sigma)$ for some $\mat{\mu}\in\left(\mathbb{C}\setminus\mathbb{Z}\right)^{n}$, a   dominant $\mathfrak{so}(2n)$-weight $\mat{\lambda}\in(\frac{1}{2}+\mathbb{Z})^{n}$, and a subset $\Sigma$ of $ {\rm Int} (2{\mat{\mu}})$. 
\item[(ii)] $V(\mat{\mu}, \mat{\lambda}, \Sigma) \simeq V(\mat{\mu'}, \mat{\lambda'}, \Sigma')$  if and only if $\Sigma = \Sigma'$, and 
\begin{itemize}
\item[(a)]  $\mat{\lambda}=\mat{\lambda'}$ and $2(\mat{\mu}-\mat{\mu'})\in \mathcal Q_C$, or 
\item[(b)] $\mat{\lambda} = \tau_{1} (\mat{\lambda'})$ and $2(\mat{\mu} - \mat{\mu'}) - \epsilon_1 \in \mathcal Q_C$. 
\end{itemize}
\end{itemize}
\end{theorem}
\begin{proof}
By Theorem \ref{thm-s-v} and Proposition  \ref{prop-class-sp} we have $V({\mat{\mu}}, \mat{\lambda}, \Sigma) \simeq V({\mat{\mu'}}, \mat{\lambda'}, \Sigma')$ if and only if $\Sigma = \Sigma'$ and
\begin{itemize}
\item[(A)] $\mat{\lambda}=\mat{\lambda'}$ and $2(\mu-\mu')\in\mathbb{Z}^{n}$, or
\item[(B)] $\mat{\lambda'} + \mat{1} + \rho_C = \tau_{1} (\mat{\lambda} + \mat{1} + \rho_C)$ and $\mat{\lambda'} + 2\mat{\mu'} - \left(\mat{\lambda} + 2\mat{\mu}\right) \in \mathcal Q_C$.
\end{itemize}
Note that we can rewrite (B) in a simpler way, namely as  (ii)(b) in the theorem. 
\end{proof}

\begin{proposition} \label{prop-tw-loc}
$V(\mat{\mu}, \mat{\lambda}) \simeq D_{\llbracket n \rrbracket }^{\mat{\mu}} L_C(\mat{\lambda} + \mat{1})$.
\end{proposition}
\begin{proof}
We have that $F_{k,-k}$ acts injectively on $V(\mat{\mu}, \mat{\lambda})$ by \eqref{Formulas sp1}. By Proposition \ref{number of D-standard tableaux} all weight spaces of $V(\mat{\mu}, \mat{\lambda})$ have the same dimension, hence,  $F_{k,-k}$ acts bijectively on $V(\mat{\mu}, \mat{\lambda})$.  
Thus we have:
\begin{eqnarray*}
V(\mat{\mu}, \mat{\lambda}) & \simeq & D_{\llbracket n \rrbracket } V(\mat{\mu}, \mat{\lambda}) \simeq D_{\llbracket n \rrbracket } V(\mat{\mu}, \mat{\lambda},  {\rm Int} (2{\mat{\mu}}))  \\ & \simeq &  D_{\llbracket n \rrbracket } S(\chi_{\smat{\lambda} + \smat{1}\mat{}}, 2{\mat{\mu}} + \mat{\lambda} + \mat{1} + \mathcal Q_C, {\rm Int} (2{\mat{\mu}}))  \simeq D_{\llbracket n \rrbracket }^{\mat{\mu}} L_C(\mat{\lambda} + \mat{1}).
\end{eqnarray*}
The second isomorphism follows from the exactness of $ D_{\llbracket n \rrbracket }$, and the fact that $D_{\llbracket n \rrbracket } \left(V(\mat{\mu}, \mat{\lambda})/V(\mat{\mu}, \mat{\lambda},  {\rm Int} (2{\mat{\mu}}))  \right) =0$.
The third isomorphism follows from  Theorem \ref{thm-s-v}. The last isomorphism follows from Proposition \ref{sp-loc-inj}.
\end{proof}

\begin{corollary}\label{cor-inj}
The module $V(\mat{\mu}, \mat{\lambda})$ is the injective envelope of $V(\mat{\mu}, \mat{\lambda},{\rm Int} (2{\mat{\mu}}))$ in the category of bounded $\mathfrak{sp}(2n)$-modules. Furthermore,  $V(\mat{\mu}, \mat{\lambda}) \simeq  V(\mat{\mu'}, \mat{\lambda'})$ if and only if $\mat{\mu}, \mat{\lambda}, \mat{\mu'}, \mat{\lambda'}$ satisfy conditions (ii)(a) or (ii)(b) of Theorem \ref{thm-main}.
\end{corollary}

\begin{remark}
Proposition \ref{prop-tw-loc} leads to an explicit description of all twisted $\llbracket n \rrbracket $-localizations of highest weight modules in the category of bounded modules with central character $\chi_{\smat{\lambda} + \smat{1}\mat{}}$. By Proposition \ref{prop-missing}, the one that is ``missing'' is  the module $D_{\llbracket n \rrbracket }^{\frac{1}{2}(\varepsilon_1+ {\bf 1})}  L(\mat{\lambda} + \mat{1})$, But this module is isomorphic to $D_{\llbracket n \rrbracket }^{\bf1/2} L(\tau_1 \mat{\lambda} + \mat{1}) \simeq V(\mat{\frac{1}{2}}, \tau_1 \mat{\lambda} )$. Also, the other injectives (and hence, projectives) in $\mathcal B$ are obtained via a twist of products of long reflections.
\end{remark}

\section*{Appendix: Proof of Theorem \ref{HW tableau}}

In this Appendix we prove that $F_{k-1,k}T(W_{\smat{\lambda}\mat{}})=0$ for any $k=2,\ldots,n$, and $F_{-j,j}T(W_{\smat{\lambda}\mat{}}) =0$ for any $j\in \llbracket n \rrbracket $. For this purpose we need two lemmas.

Recall the formulas \eqref{operator A},  \eqref{operator B},  \eqref{operator C},  \eqref{operator D}.

\begin{lemma}\label{some identities}
Set $1\leq k\leq n$, and let $T(L)$ be any tableau in $\mathcal{B}(\mat{\mu},\mat{\lambda})$ such that $\ell'_{k1}=\frac{1}{2}$, $\ell'_{kk}=\ell_{kk}$, and  $\ell_{k, i}=\ell'_{ki}=\ell_{k-1,i}$, for any $2\leq i\leq k-1$. Then:

\begin{itemize}
\item[(i)] $B_{ki}(L)=0$ for any $i\in \llbracket k \rrbracket $.
\item[(ii)] $B_{kr}(L-\delta^{(k-1,s)})=0$, for any $s\in \llbracket k \rrbracket $, and any $2\leq r\leq k-1$.
\item[(iii)] $B_{kt}(L+\delta^{(k,i)'}+\delta^{(k-1,j)})=0$, for any $1\leq t\neq i\leq k$.
\item[(vi)] $B_{kj}(L+\delta^{(k,j)'}+\delta^{(k-1,j)})=0$, for any $2\leq j\leq k-1$.
\item[(v)] $D_{kijm}(L)=0$ whenever $j\neq 1$, and $i\neq j$.
\end{itemize}
\end{lemma}
\begin{proof}
All statements follow by a direct computation. 
\end{proof}
\begin{lemma}\label{more identities}
Set $k\in \llbracket n \rrbracket $, and let $T(L)$ be as in Lemma \ref{some identities} such that $\ell_{k1}+\ell_{k-1,1}=1$. Then for any $m\in \llbracket k-1 \rrbracket $ we have 
 $$\sum\limits_{i=1}^{k}D_{ki1m}(L)B_{ki}(L+\delta^{(k,i)'}+\delta^{(k-1,1)})=0.$$
\end{lemma}
\begin{proof} 

First note that $D_{ki1m}(L)=A_{k-1,m}(L)C_{k1}(L)\prod\limits_{a\neq m}^{k-1}(l^2_{k-1,1}-l^{'2}_{k-1,a})\widetilde{D}_{ki1m}(L)$, where $
\widetilde{D}_{ki1m}(L)=A_{ki}(L)\prod\limits_{a\neq i}^{k}(l^2_{k-1,1}-l^{'2}_{k,a})=\prod\limits_{a\neq i}^{k}\frac{(\ell_{k-1,1}^2-l'^2_{k,a})}{(\ell'_{ka}-\ell'_{ki})}
$

$$
\widetilde{D}_{ki1m}(L)=\begin{cases}
\frac{\prod\limits_{a\neq 1}^{k-1}(\ell_{k-1,1}^2-l^2_{k,a})}{\prod\limits_{a\neq 1}^{k}(\ell_{ka}-\ell'_{k1})},& \text{ if } i=1,\\
\frac{(l^2_{k-1,1}-l'^{2}_{k1})}{\ell'_{k1}-\ell_{ki}}\left(\frac{\prod\limits_{a\neq 1,i}^{k-1}(\ell_{k-1,1}^2-l^2_{k,a})}{\prod\limits_{a\neq 1,i}^{k}(\ell_{ka}-\ell_{ki})}\right),& \text{ if } i\neq 1,
\end{cases}
$$

$$
B_{ki}(L+\delta^{(k,i)'}+\delta^{(k-1,1)})=\begin{cases}
-4(l^2_{k-1,1}-l'^{2}_{k1})\prod\limits_{a\neq 1}^{k-1}(\ell_{ka}-\ell'_{k1}-1),& \text{ if } i=1,\\
-4(l^{2}_{k-1,1}-l^{2}_{k,i})\prod\limits_{a\neq 1}^{k-1}(\ell_{ka}-\ell_{ki}-1),& \text{ if } i\neq 1.
\end{cases}
$$

After factoring out  
$$
-4A_{k-1,m}(L)C_{k1}(L)(l^2_{k-1,1}-l'^{2}_{k1})\prod\limits_{a\neq m}^{k-1}(l^2_{k-1,1}-l^{'2}_{k-1,a})\prod\limits_{a\neq 1}^{k-1}(\ell_{k-1,1}^2-l^2_{k,a})
$$ 
from the left hand side, we see that it is enough to show:

\begin{align}\label{lagrange}
\small \frac{\prod\limits_{a\neq 1}^{k-1}(\ell_{ka}-\ell'_{k1}-1)}{\prod\limits_{a\neq 1}^{k}(\ell_{ka}-\ell'_{k1})}+\sum\limits_{i=2}^{k}\frac{\prod\limits_{a\neq 1}^{k-1}(\ell_{ka}-\ell_{ki}-1)}{(\ell'_{k1}-\ell_{ki})\prod\limits_{a\neq 1,i}^{k}(\ell_{ka}-\ell_{ki})}=\sum\limits_{i=1}^{k}\frac{\prod\limits_{a\neq 1}^{k-1}(c_{a}-c_{i}-1)}{\prod\limits_{a\neq i}^{k}(c_{a}-c_{i})},
\end{align}
where $c_{1}=\ell'_{k1}, c_{2}=\ell_{k2},\ldots,c_{k}=\ell_{kk}$. Note that $c_{i}\neq c_{j}$ for $i\neq j$. But with the aid of Lagrange interpolation formula, the polynomial $f(x)=\prod\limits_{a\neq 1}^{k-1}(c_{a}-x-1)$ of degree $k-2$ can be written as follows:

\begin{align}\label{Lagrange2}
f(x)=\sum\limits_{i=1}^{k}f(c_{i})\left(\frac{\prod\limits_{a\neq i}^{k}(c_{a}-x)}{\prod\limits_{a\neq i}^{k}(c_{a}-c_{i})}\right)=\sum\limits_{i=1}^{k}\left(\frac{\prod\limits_{a\neq 1}^{k-1}(c_{a}-c_{i}-1)}{\prod\limits_{a\neq i}^{k}(c_{a}-c_{i})}\right)\prod\limits_{a\neq i}^{k}(c_{a}-x).
\end{align}

Since $\prod\limits_{a\neq i}^{k}(c_{a}-x)$ is a polynomial of degree $k-1$ whose leading coefficient equals $(-1)^{k-1}$, by comparing the  coefficients of $x^{k-1}$ in (\ref{Lagrange2}) we obtain $$0=(-1)^{k-1}\sum\limits_{i=1}^{k}\left(\frac{\prod\limits_{a\neq 1}^{k-1}(c_{a}-c_{i}-1)}{\prod\limits_{a\neq i}^{k}(c_{a}-c_{i})}\right).$$\end{proof}

We are now ready to prove the theorem. We first note that $T(W_{\smat{\lambda}\mat{}})$ satisfies the conditions of Lemma \ref{some identities}, and Lemma \ref{more identities}. In particular:

\begin{enumerate}[$(1)$]
\item $F_{-k,k}(T(W))=\sum\limits_{i=1}^{k}B_{ki}(W)T(W-\delta^{(ki)'})=0$, by Lemma \ref{some identities}(i).
\item $2F_{k-1,k}(T(W))=\left[F_{k-1,-k},\ F_{-k,k}\right](T(W))=F_{-k,k}(F_{k-1,-k}(T(W)))$ by $(1)$.
\medskip 

We note that $B_{ki}(L)$ does not depend in row $(k-1)'$. Hence:

\begin{align*}
& F_{-k,k}(F_{k-1,-k}(T(W))) = \sum\limits_{i=1}^{k-1}C_{k,i}(W)\sum\limits_{r=1}^{k}B_{kr}(W-\delta^{(k-1,i)})T\left(W-\delta^{(k-1,i)}-\delta^{(kr)'}\right)+\\
&\sum\limits_{i,t=1}^{k} \sum\limits_{j,m=1}^{k-1}D_{k,i,j,m}(W) B_{kt}(W+\delta^{(k,i)'}+\delta^{(k-1,j)})T\left(W+\delta^{(k,i)'}+\delta^{(k-1,j)}+\delta^{(k-1,m)'}-\delta^{(k,t)'}\right)
\end{align*}
\begin{align*}
\stackrel{(a)}{=}&\sum\limits_{i,t=1}^{k} \sum\limits_{j,m=1}^{k-1}D_{k,i,j,m}(W) B_{kt}(W+\delta^{(k,i)'}+\delta^{(k-1,j)})T\left(W+\delta^{(k,i)'}+\delta^{(k-1,j)}+\delta^{(k-1,m)'}-\delta^{(k,t)'}\right)\\
\stackrel{(b)}{=}&\sum\limits_{i=1}^{k} \sum\limits_{j,m=1}^{k-1}D_{k,i,j,m}(W) B_{ki}(W+\delta^{(k,i)'}+\delta^{(k-1,j)})T\left(W+\delta^{(k-1,j)}+\delta^{(k-1,m)'}\right)
\end{align*}
\begin{align*}
=&\sum\limits_{i=1}^{k} \sum\limits_{m=1}^{k-1}D_{k,i,1,m}(W) B_{ki}(W+\delta^{(k,i)'}+\delta^{(k-1,1)})T\left(W+\delta^{(k-1,1)}+\delta^{(k-1,m)'}\right)+\\
&\sum\limits_{i=1}^{k} \sum\limits_{m=1,j=2}^{k-1}D_{k,i,j,m}(W) B_{ki}(W+\delta^{(k,i)'}+\delta^{(k-1,j)})T\left(W+\delta^{(k-1,j)}+\delta^{(k-1,m)'}\right)
\end{align*}
\begin{align*}
\stackrel{(c)}{=}&\sum\limits_{i=1}^{k} \sum\limits_{m=1}^{k-1}D_{k,i,1,m}(W) B_{ki}(W+\delta^{(k,i)'}+\delta^{(k-1,1)})T\left(W+\delta^{(k-1,1)}+\delta^{(k-1,m)'}\right)+\\
& \sum\limits_{m=1,j=2}^{k-1}D_{k,j,j,m}(W) B_{kj}(W+\delta^{(k,j)'}+\delta^{(k-1,j)})T\left(W+\delta^{(k-1,j)}+\delta^{(k-1,m)'}\right)
\end{align*}
\begin{align*}
\stackrel{(d)}{=}&\sum\limits_{i=1}^{k} \sum\limits_{m=1}^{k-1}D_{k,i,1,m}(W) B_{ki}(W+\delta^{(k,i)'}+\delta^{(k-1,1)})T\left(W+\delta^{(k-1,1)}+\delta^{(k-1,m)'}\right)
\end{align*}
\begin{align*}
=&\sum\limits_{m=1}^{k-1}\left(\sum\limits_{i=1}^{k} D_{k,i,1,m}(W) B_{ki}(W+\delta^{(k,i)'}+\delta^{(k-1,1)})\right)
T\left(W+\delta^{(k-1,1)}+\delta^{(k-1,m)'}\right)\\
\stackrel{(e)}{=}&0.
\end{align*}
 \end{enumerate}
In the identities above $(a)$ follows from Lemma \ref{some identities}(ii), $(b)$ follows from Lemma \ref{some identities}(iii), $(c)$ follows from Lemma \ref{some identities}(iv), $(d)$ follows from Lemma \ref{some identities}(v), and $(e)$ follows from Lemma \ref{more identities}.\end{proof}

\end{document}